\numberwithin{equation}{section}
\newtheorem{thm}{Theorem}[section]
\newtheorem{lem}[thm]{Lemma}
\newtheorem{cor}[thm]{Corollary}
\newcommand{\be}{\begin{equation}}
\newcommand{\ee}{\end{equation}}
\newcommand{\ba}{\begin{array}}
\newcommand{\ea}{\end{array}}
\renewcommand{\em}{\it}
\newcommand{\al}{\alpha}
\newcommand{\bea}{\begin{eqnarray}}
\newcommand{\eea}{\end{eqnarray}}
\begin{document}
\title[Differential equations for Septic theta functions]{Differential equations for Septic theta functions}

\author{Tim Huber and Danny Lara}
\address{Department of Mathematics, University of Texas - Pan American, 1201 West University Avenue,  Edinburg, Texas 78539,  USA}




\subjclass[2010]{Primary 11F03; Secondary 11F11}

\begin{abstract}
We demonstrate that quotients of septic theta functions appearing in
S. Ramanujan's Notebooks and in F. Klein's work satisfy a new coupled system of nonlinear
differential equations with interesting symmetric form. This differential system bears a close
resemblance to an analogous system for quintic theta functions. The
proof extends a technique used by Ramanujan to prove the classical
differential system for normalized Eisenstein series on the full
modular group. In the course of our work, we show that Klein's quartic relation induces new symmetric
representations for low weight Eisenstein series in terms of weight
one modular forms of level seven.  
\end{abstract}

\maketitle





\section{Introduction}

Let $|q|< 1$, and 
\begin{align*}
 a(q) = - \sum_{n=-\infty}^{\infty} (-1)^{n} q^{(14n +5)^{2}/56}&, \qquad  b(q) = \sum_{n=-\infty}^{\infty} (-1)^{n} q^{(14n +3)^{2}/56}, \\ 
 c(q) = \sum_{n=-\infty}^{\infty} &(-1)^{n} q^{(14n +1)^{2}/56}.
\end{align*}
These septic theta functions have an interesting provenance. They
first appeared  in the 1879  work of Felix Klein \cite{MR1722419} and
were studied independently by Ramanujan
\cite[p. 300]{ramnote}. With entirely different motivations, Klein and
Ramanujan derived identities
between these theta functions equivalent to Klein's eponymous quartic relation
\begin{align}
  \label{eq:55}
  a^{3}(q) b(q) + b^{3}(q) c(q) + c^{3}(q) a(q) = 0.
\end{align}
The septic theta functions $a(q)$, $b(q)$, and
$c(q)$ also appear in connection with Ramanujan's seventh order
mock theta functions \cite{selberg}. We augment the
work of Klein and Ramanujan by formulating a new coupled system of differential equations for these theta functions. Our approach is based on Ramanujan's famous proof of the coupled differential system for the normalized Eisenstein series on the full modular group \cite{Raman1}
{\allowdisplaybreaks \begin{equation} 
\label{rdiff1}  q \frac{dE_{2}}{dq} = \frac{E_{2}^{2} - E_{4}}{12}, \qquad  q \frac{dE_{4}}{dq} = \frac{E_{2}E_{4} - E_{6}}{3}, \qquad q \frac{dE_{6}}{dq} = \frac{E_{2}E_{6} - E_{4}^{2}}{2},
\end{equation}}
where the Eisenstein series $E_{k} = E_{k}(q)$ are defined by
\begin{align} \label{eis_full}
  E_{2k}(q) = 1 + \frac{2}{\zeta(1 - 2 k)} \sum_{n=1}^{\infty} \frac{n^{2k-1} q^{n}}{1 - q^{n}},
\end{align}
and where $\zeta$ is the analytic continuation of the Riemann $\zeta$-function.
 Ramanujan proved \eqref{rdiff1} by formulating two identities involving the classical Weierstrass zeta function 
\begin{align} \label{wzdef}
\zeta(\theta \mid q) &= \frac{1}{2} \cot \frac{\theta}{2} + \frac{\theta}{12} - 2 \theta \sum_{n=1}^{\infty} \frac{n q^{n}}{1 - q^{n}} + 2 \sum_{n=1}^{\infty} \frac{q^{n} \sin n \theta}{1 - q^{n}}.
\end{align}

We follow Ramanujan's lead to derive a new differential system from elementary properties of elliptic functions. Our work culminates in a curiously symmetric coupled differential system for the quotients 
\begin{align} \label{xy}
  x(q) = 
 q^{7/8} (q^{7}; &q^{7})_{\infty}^{3}\frac{b(q)}{c^{2}(q)}, 
\qquad  y(q) =  
 -q^{7/8} (q^{7}; q^{7})_{\infty}^{3}\frac{a(q)}{b^{2}(q)}, \\   
&z(q) = 
q^{7/8} (q^{7}; q^{7})_{\infty}^{3}\frac{c(q)}{a^{2}(q)}, \label{z}
\end{align}
where here and throughout the paper, we employ the notation $$(a;q)_{n} =  \prod_{k=0}^{n-1} (1 - a q^{k}), \qquad (a;q)_{\infty} = \lim_{n \to \infty} \prod_{k=0}^{n-1} (1 - a q^{k}).$$
\begin{thm} \label{d_sept}  Let $\mathcal{P}(q) = E_{2}(q^{7})$, where $ E_{2}(q) = 1 - 24 \sum_{n=1}^{\infty} \bigl ( \sum_{d \mid n} d \bigr ) q^{n}$. Then 
  \begin{align} \label{deqx}
    q\frac{d}{dq}x &= \frac{x}{12} \Bigl ( 5 y^{2} + 5z^{2} - 7x^{2}-
    20 yz -52 xy  + 7 \mathcal{P} \Bigr ), \\ q\frac{d}{dq}y
    &=\frac{y}{12} \Bigl ( 5z^{2 } + 5x^{2}  - 7 y^{2} + 20 xz -52yz + 7 \mathcal{P} \Bigr ), \label{deqy} \\ q\frac{d}{dq}z &= \frac{z}{12} \Bigl ( 5x^{2} + 5y^{2} - 7 z^{2} - 20 xy+ 52xz + 7 \mathcal{P}  \Bigr ), \label{deqz}
  \end{align}
  \begin{align}
     q \frac{d}{dq} \mathcal{P}(q) = &= \frac{7}{12} \Bigl (\mathcal{P}^2 -x^4+4 x^3 y+12 x y^3-y^4-12 x^3 z+4 y^3 z-4 x z^3+12 y z^3-z^4 \Bigr ). \nonumber
  \end{align}
\end{thm}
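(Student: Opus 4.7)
The plan is to mirror Ramanujan's proof of \eqref{rdiff1} in the septic setting, where the Weierstrass zeta function \eqref{wzdef} evaluated at arguments $2\pi k/7$ plays the role it did for the full modular group. First I would reexpress the theta functions $a(q), b(q), c(q)$ as infinite products via the Jacobi triple product identity; a direct calculation gives
\[
a(q) = -q^{25/56}(q^{7};q^{7})_{\infty}(q;q^{7})_{\infty}(q^{6};q^{7})_{\infty},
\]
and analogous products for $b, c$ with residue pairs $\{2,5\}$ and $\{3,4\}$ modulo $7$. Consequently $x, y, z$ are eta quotients, and the problem reduces to controlling the logarithmic $q$-derivatives of the factors $(q^{j};q^{7})_{\infty}$ for $j=1,\dots,6$, together with that of $(q^{7};q^{7})_{\infty}$.

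Taking $q\tfrac{d}{dq}\log$ of \eqref{xy} and \eqref{z} yields, for example,
\[
\frac{q}{x}\frac{dx}{dq} = \frac{7}{8} + 3\,q\frac{d}{dq}\log(q^{7};q^{7})_{\infty} + q\frac{d}{dq}\log b(q) - 2\,q\frac{d}{dq}\log c(q),
\]
together with two cyclic analogues. The eta quotient piece contributes $\tfrac{7}{8}\mathcal{P}$ using the standard identity $q\tfrac{d}{dq}\log(q^{7};q^{7})_{\infty} = \tfrac{7}{24}(\mathcal{P}-1)$ that follows from \eqref{eis_full}. The remaining logarithmic derivatives of the individual factors $(q^{j};q^{7})_{\infty}$ are Lambert series of the type
\[
L_{j}(q) := \sum_{n=1}^{\infty} \frac{(7n-j)\,q^{7n-j}}{1-q^{7n-j}},
\]
and each such $L_{j}$ arises as a value of an auxiliary Weierstrass-type zeta function at the seventh root of unity $e^{2\pi i j/7}$, placing them in the finite-dimensional space of weight-two forms on a congruence subgroup containing $\Gamma_{0}(7)$.

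Next I would identify each $L_{j}$ as a specific polynomial in $x, y, z, \mathcal{P}$. The essential ingredient is the new symmetric representations for low weight Eisenstein series alluded to in the abstract: one writes classical weight-two combinations (such as $E_{2}(q) - 7E_{2}(q^{7})$) as explicit cyclically symmetric quadratics in $x, y, z$ modulo the weight-two consequences of Klein's quartic relation \eqref{eq:55}, verifying each identification by matching a bounded number of initial $q$-expansion coefficients. Substituting back into the three logarithmic-derivative equations produces the right-hand sides of \eqref{deqx}--\eqref{deqz}. The cyclic structure $(a,b,c)\mapsto(b,c,a)$ underlying \eqref{eq:55}, which permutes $(x,y,z)$ up to the sign appearing in the definition of $y$, should reduce the work to verifying \eqref{deqx} and then reading off \eqref{deqy} and \eqref{deqz} by symmetry.

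The equation for $\mathcal{P}$ follows by applying the classical identity $q\tfrac{dE_{2}}{dq} = \tfrac{1}{12}(E_{2}^{2} - E_{4})$ to the argument $q^{7}$, giving
\[
q\frac{d}{dq}\mathcal{P} = \frac{7}{12}\bigl(\mathcal{P}^{2} - E_{4}(q^{7})\bigr).
\]
The stated formula therefore amounts to the weight-four identity expressing $E_{4}(q^{7})$ as the displayed quartic in $x, y, z$. Since both sides are weight-four forms on the same congruence subgroup, equality reduces to matching finitely many initial Fourier coefficients, with \eqref{eq:55} used to eliminate dependent monomials. The main obstacle I anticipate is the bookkeeping required to translate the Weierstrass zeta evaluations at seventh roots of unity into the precise weight-two combinations appearing in \eqref{deqx}--\eqref{deqz}; once these weight-two identifications are secured, the rest of the argument is essentially routine $q$-series verification within a low-dimensional space of modular forms.
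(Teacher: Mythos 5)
Your proposal shares its skeleton with the paper's argument: both start by logarithmically differentiating the product representations \eqref{xyz_prod} to write $q\frac{d}{dq}\log x$ as a Lambert series with coefficients periodic modulo seven, and both prove the fourth equation by applying Ramanujan's identity \eqref{rdiff1} at $q^{7}$ and then parameterizing $E_{4}(q^{7})$ as a quartic in $x,y,z$ (the paper's \eqref{eq:4}, obtained from \eqref{sei} via Klein's relation \eqref{eq:10}). Where you genuinely diverge is in how the key weight-two and weight-four identities are certified. The paper is entirely constructive: it expresses $x,y,z$ as explicit linear combinations of the parameters $e_{1/7},e_{2/7},e_{3/7}$ via Liu's theta identities and finite Fourier analysis, derives exact Lambert-series expansions of the quadratics from Ramanujan's trigonometric identity and the Frobenius--Stickelberger formula, and gets the $E_{4}(q^{7})$ parameterization from the Cooper--Toh formulas together with a Hauptmodul representation. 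You propose instead to place both sides of each identity in a finite-dimensional space of modular forms and match finitely many coefficients. That route can be made rigorous and is arguably lighter on elliptic-function machinery, but as written it has three concrete gaps.

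First, the individual Lambert series $L_{j}$ are \emph{not} modular forms, so the plan to ``identify each $L_{j}$ as a specific polynomial in $x,y,z,\mathcal{P}$'' fails at that granularity; only the particular combination occurring in $q\frac{d}{dq}\log x$ becomes modular, and only after the full quasi-modular part is stripped off. Note that the correct multiple of $\mathcal{P}$ in \eqref{deqx} is $\tfrac{7}{12}$, whereas your $(q^{7};q^{7})_{\infty}^{3}$ bookkeeping isolates $\tfrac{7}{8}\mathcal{P}$: the weight $-1$ quotient $b/c^{2}$ contributes a residual $-\tfrac{7}{24}\mathcal{P}$ of quasi-modularity that must be identified before any Sturm-type argument applies. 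Second, the coefficient-matching step presupposes that $x,y,z$ are weight-one modular forms on an identified congruence subgroup with identified multiplier system; you assert this but never establish it, and it is the load-bearing hypothesis for ``finitely many coefficients suffice.'' Third, the cyclic map $(a,b,c)\mapsto(b,c,a)$ underlying \eqref{eq:55} does permute the three differential equations formally, but it is realized by an automorphism of the Klein curve (a diamond/Galois operator), not by any substitution in $q$; to read off \eqref{deqz} from \eqref{deqx} you must show that this operator commutes with $q\frac{d}{dq}$ and fixes $\mathcal{P}$, which is not automatic. The paper avoids all three issues by proving exact series identities term by term, at the cost of heavier explicit computation; if you supply the modularity statements and the quasi-modular bookkeeping, your version goes through.
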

The form and symmetry present in this differential system is also
exhibited by a recently derived coupled system satisfied by
quintic theta functions \cite{huber_jac}, defined by
\begin{align*}
    A(q) &= q^{1/5} (q;q)_{\infty}^{-3/5} \sum_{n=-\infty}^{\infty} (-1)^{n} q^{(5n^{2} - 3n)/2}, \quad  B(q) = (q;q)_{\infty}^{-3/5} \sum_{n=-\infty}^{\infty} (-1)^{n} q^{(5n^{2} - n)/2}.
\end{align*}
\begin{thm} \label{d_quint} Let $\mathscr{P}(q) = E_{2}(q^{5})$, where $ E_{2}(q) = 1 - 24 \sum_{n=1}^{\infty} \bigl ( \sum_{d \mid n} d \bigr ) q^{n}$. Then 
  \begin{align}
    q \frac{d}{dq} A &= \frac{1}{60} A \Bigl (7 B^{10}-5 A^{10}-66 A^5 B^5+5 \mathscr{P} \Bigr ), \\ 
q \frac{d}{dq} B &= \frac{1}{60} B \Bigl (7 A^{10}-5 B^{10}+66 A^5 B^5+5 \mathscr{P} \Bigr ), 
  \end{align}
  \begin{align}
    q \frac{d}{dq} \mathscr{P} &= \frac{5}{12}\left( \mathscr{P}^{2}-B^{20}+ 12 B^{15}A^5 - 14 B^{10}A^{10}- 12 B^5 A^{15}- A^{20}\right). \label{dpp}
  \end{align}
\end{thm}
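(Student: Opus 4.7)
The plan is to mirror the strategy used to prove Theorem~\ref{d_sept} in the quintic setting. Via the Jacobi triple product one writes
\[
A(q) = q^{1/5} (q;q)_\infty^{-3/5} (q, q^4, q^5; q^5)_\infty, \qquad B(q) = (q;q)_\infty^{-3/5} (q^2, q^3, q^5; q^5)_\infty,
\]
essentially presenting $A$ and $B$ as Rogers--Ramanujan functions normalized by a fractional power of $(q;q)_\infty$. A short calculation then yields the key structural fact $A^5 B^5 = \eta(5\tau)^5/\eta(\tau)$, exhibiting $A^5$ and $B^5$ as weight-one modular forms on a level-$5$ congruence subgroup. Consequently the weight-two objects $A^{10}$, $A^{5}B^{5}$, $B^{10}$ together with $\mathscr{P} = E_2(q^5)$ span the quasi-modular space needed to accommodate the right-hand sides.

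\textbf{Step 1.} Compute $q\,\frac{d}{dq}\log A$ and $q\,\frac{d}{dq}\log B$ by combining $q\,\frac{d}{dq}\log (q;q)_\infty = (E_2(q)-1)/24$ with term-by-term logarithmic differentiation of each factor $(q^{a}; q^{5})_\infty$. This presents the logarithmic derivatives of $A$ and $B$ as linear combinations of $E_2(q)$, $\mathscr{P}$, and Lambert series $\sum_{n \geq 1} \chi_j(n)\, nq^n/(1-q^n)$ with $\chi_j$ supported on a residue class modulo $5$.

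\textbf{Step 2.} Each such Lambert series is quasi-modular of weight two at level~$5$; resolve each in the basis $\{\mathscr{P},\, A^{10},\, A^{5}B^{5},\, B^{10}\}$. The pivotal identity is
\[
E_2(q) - 5\mathscr{P} = \alpha\, A^{10} + \beta\, A^{5}B^{5} + \gamma\, B^{10}
\]
for explicit rationals $\alpha,\beta,\gamma$; substituting back into Step~1 yields the asserted equations for $A$ and $B$.

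\textbf{Step 3.} For $\mathscr{P}$, apply Ramanujan's identity $q\,dE_2/dq = (E_2^{2}-E_4)/12$ at argument $q^{5}$ to obtain
\[
q\,\frac{d}{dq}\mathscr{P} = \frac{5}{12}\bigl(\mathscr{P}^{2} - E_4(q^5)\bigr).
\]
Comparison with \eqref{dpp} reduces the claim to the weight-four identity
\[
E_4(q^5) = B^{20} - 12\, B^{15}A^{5} + 14\, B^{10}A^{10} + 12\, B^{5}A^{15} + A^{20}.
\]
Both sides lie in the finite-dimensional space $M_4(\Gamma_0(5))$, so equality follows by matching a bounded number of initial $q$-coefficients (Sturm bound).

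\textbf{Main obstacle.} The technical heart of the argument is pinning down the explicit polynomial expressions in Steps~2 and~3. Their existence is guaranteed by finite-dimensionality of modular forms at level~$5$, but the specific numerical coefficients --- in particular the symmetric pattern $(1,-12,14,12,-1)$ in the weight-four identity, and the sign distribution in the weight-two identity --- must be extracted by computation. A clean route is to combine the Rogers--Ramanujan identities with classical eta-quotient relations at level~$5$ (using the known generators of $M_{k}(\Gamma_0(5))$), reducing the verification to checking a handful of initial $q$-series coefficients. Once these polynomial identities are in place, the coupled system follows by elementary algebraic manipulation.
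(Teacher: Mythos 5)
Your outline is essentially correct, but note first that the paper does not actually prove Theorem \ref{d_quint}: it is quoted from \cite{huber_jac}, and the only proof carried out in this paper is of the septic analogue, Theorem \ref{d_sept}. Measured against that proof, your architecture is the same at the top level (logarithmic differentiation of Jacobi-triple-product representations; reduction of the last equation to Ramanujan's $q\,dE_2/dq=(E_2^2-E_4)/12$ at argument $q^5$ together with a weight-four parameterization of $E_4(q^5)$), but your mechanism for the weight-two identities is genuinely different. The paper never invokes finite-dimensionality of $M_k(\Gamma_1(N))$ or a Sturm bound: it writes the theta quotients as explicit linear combinations of the parameters $e_{j/N}(q)$ and converts the quadratic right-hand sides into closed-form Lambert series via Ramanujan's squared Weierstrass $\zeta$-identity and the Frobenius--Stickelberger formula (Lemmas \ref{lemp} and \ref{ba}), so the whole verification collapses to matching periodic sequences modulo the level. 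Your route instead treats each Lambert series $\sum \chi_j(n)nq^n/(1-q^n)$ as an element of a known finite-dimensional space spanned by $\{\mathscr{P},A^{10},A^5B^5,B^{10}\}$ and extracts coefficients numerically; this works (the pivotal identity is $E_2(q)-5\mathscr{P}=-4(A^{10}+B^{10})$, and the quoted $E_4(q^5)$ expansion is the one from \cite{qeis}/\cite{MR2511669}), and it is shorter to state, but it buys that brevity by outsourcing two points you should make explicit: (i) only the even residue classes mod $5$ occur in Step 1, so every Lambert series that appears really is quasi-modular of weight two (an odd-character piece would not be), and (ii) $A^{10}$, $A^5B^5$, $B^{10}$ must be shown holomorphic on $\Gamma_1(5)$ including at the cusps before the Sturm bound applies --- the displayed product identity $A^5B^5=\eta^5(5\tau)/\eta(\tau)$ does not by itself establish modularity of the individual factors. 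The paper's elliptic-function route avoids both issues and yields exact closed forms rather than finitely verified identities, at the cost of heavier trigonometric bookkeeping.
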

Our formulas for Eisenstein series in Theorem \ref{fina}
will demonstrate the equivalence of the fourth differential equation
of Theorem \ref{d_sept} and the first equation of Ramanujan's
differential system \eqref{rdiff1}. To accomplish this, we rely on septic parameterizations for
Eisenstein series equivalent to formulas appearing in Ramanujan's Lost
Notebook \cite{ramlost}. 

Each differential system appearing here is analogous to corresponding nonlinear coupled systems for modular forms of lower level. In particular, Theorems \ref{d_quint} and \ref{d_sept} are analogous to several coupled systems for the cubic theta functions \cite{cubic,maier-2008} 
\begin{align}
\label{maier} \displaystyle q\frac{d}{dq} a &= \frac{a \mathscr{P} - b^{3}}{3}, \qquad \displaystyle q \frac{d}{dq} \mathscr{P} = \frac{\mathscr{P}^{2} - a b^{3}}{3}, \qquad \displaystyle q \frac{d}{dq} b^{3} = \mathscr{P} b^{3} - a^{2}b^{3},
\end{align}
where 
\begin{align}
a(q) &= \sum_{m,n =-\infty}^{\infty} q^{n^{2} + n m + m^{2}},  \quad
b(q) = \sum_{m,n =-\infty}^{\infty} \omega^{n-m} q^{n^{2} + n m + m^{2}}, \label{abq} \\ 
c(q) &= \sum_{m,n =-\infty}^{\infty}  q^{ \left ( n + \frac{1}{3} \right )^{2} + \left ( n + \frac{1}{3} \right ) \left ( m + \frac{1}{3} \right ) + \left ( m + \frac{1}{3} \right )^{2}}, \quad \omega = e^{2 \pi i/3}, \label{cq} \\ 
& \qquad \qquad \displaystyle \mathscr{P}(q) = 1 -6 \sum_{n=1}^{\infty} \frac{\cos (2 n \pi/3) n q^{n}}{1 - q^{n}}.
\end{align}
Similar coupled systems of differential equations for modular forms of
level $2$, $4$, and $6$ appear in \cite{maier-2008} and
\cite{huber,ramamani1}, respectively. Most of the coupled systems
discussed so far are subsumed (see \cite[\S 1]{huber_quint}) by a
more general system  for the parameters \cite{huber_proc} 
  \begin{align} \nonumber
    e_{\alpha}(q) = 1 + & 4\tan( \pi \alpha) \sum_{n=1}^{\infty} \frac{\sin( 2 n  \pi \alpha) q^{n}}{1 - q^{n}}, \quad P_{\alpha}(q) = 1 - 8 \sin^{2}( \pi \alpha) \sum_{n=1}^{\infty} \frac{\cos (2 n \pi \alpha) n q^{n}}{1 - q^{n}},  \\ & Q_{\alpha}(q) = 1- 8\tan( \pi \alpha) \sin^{2}( \pi \alpha) \sum_{n=1}^{\infty} \frac{\sin( 2 n  \pi \alpha) n^{2} q^{n}}{1 - q^{n}}. \label{ser}
  \end{align}

\begin{thm}  \label{mainthm} Let $e_{\alpha}(q), P_{\alpha}(q)$, and $Q_{\alpha}(q)$ be defined as in \eqref{ser}. Then for $\alpha \not \equiv 1/2$,
\allowdisplaybreaks{\begin{align}
\label{deqe}  q \frac{d}{dq}e_{\alpha} &= \frac{\csc^{2}(\pi \alpha )}{4} \left (  e_{\alpha} P_{\alpha} - Q_{\alpha} \right ), \\ 
\label{deqp} q \frac{d}{dq}P_{\alpha} &= \frac{\csc^{2}( \pi \alpha)}{4} P_{\alpha}^{2} - \frac{1}{2} \cot^{2}( \pi \alpha) e_{\alpha} Q_{\alpha} + \frac{1}{2}\cot( \pi \alpha) \cot(2\pi \alpha)e_{1 - 2 \alpha} Q_{\alpha}, \\
\nonumber  q \frac{d}{dq}Q_{\alpha} &= \frac{1}{4} Q_{\alpha} P_{\alpha} \csc^{2}( \pi \alpha) + \frac{1}{2} P_{1 - 2 \alpha} Q_{\alpha} \csc^{2}(2\pi  \alpha) - \frac{1}{2} e_{1 - 2 \alpha}^{2} Q_{\alpha} \cot^{2}(2 \pi \alpha) \\ & \qquad + \frac{3}{2} e_{\alpha} e_{1 - 2 \alpha} Q_{\alpha} \cot( \pi \alpha) \cot(2 \pi \alpha) - e_{\alpha}^{2}Q_{\alpha} \cot^{2}( \pi \alpha). \label{deqq} 
\end{align}}
\end{thm}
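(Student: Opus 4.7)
The plan is to extend Ramanujan's derivation of \eqref{rdiff1}, which proceeds through two identities for the Weierstrass zeta function \eqref{wzdef}, by keeping $\theta$ as a free parameter and specializing to $\theta = 2\pi\alpha$ only at the end.

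First I would identify the three Lambert series in \eqref{ser} with $\zeta$, $-\zeta'$, $\zeta''$ at $\theta = 2\pi\alpha$, where primes denote $\partial/\partial\theta$. Substituting $\theta = 2\pi\alpha$ in \eqref{wzdef} and its $\theta$-derivatives, together with $E_{2}(q) = 1 - 24 \sum_{n} nq^{n}/(1-q^{n})$, gives the clean identifications
\begin{align*}
\zeta(2\pi\alpha \mid q) &= \tfrac{1}{2} e_{\alpha}(q)\cot(\pi\alpha) + \tfrac{\pi\alpha}{6} E_{2}(q), \\
-\zeta'(2\pi\alpha \mid q) &= \tfrac{1}{4} P_{\alpha}(q) \csc^{2}(\pi\alpha) - \tfrac{1}{12} E_{2}(q), \\
\zeta''(2\pi\alpha \mid q) &= \tfrac{1}{4} Q_{\alpha}(q) \cot(\pi\alpha)\csc^{2}(\pi\alpha),
\end{align*}
so that $P_{\alpha}$ and $Q_{\alpha}$ encode the Weierstrass $\wp = -\zeta'$ and $\wp' = -\zeta''$ at $\theta = 2\pi\alpha$ up to $E_{2}$-corrections.

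Next I would compute $q\partial_{q}$ of the three quantities. Jacobi's $\theta_{1}(z \mid \tau)$ satisfies a heat equation of the form $\partial_{\tau}\theta_{1} \propto \partial_{z}^{2}\theta_{1}$; applied to $\log \theta_{1}$ and combined with the elementary identity $2\zeta(2z \mid q) = \partial_{z}\log\theta_{1}(z \mid q) - zE_{2}(q)/3$ (obtained by matching the Lambert expansion of $\partial_{z}\log\theta_{1}$ against \eqref{wzdef}), this translates into a formula for $q\partial_{q}\zeta(\theta \mid q)$ as a polynomial in $\zeta(\theta\mid q)$, $\zeta'(\theta\mid q)$, $\zeta''(\theta\mid q)$. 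Differentiating in $\theta$ supplies the analogues for $q\partial_{q}\zeta'$ and $q\partial_{q}\zeta''$. Paired with Ramanujan's identity $qE_{2}' = (E_{2}^{2}-E_{4})/12$ to absorb the Eisenstein corrections, this yields expressions for $q\partial_{q}e_{\alpha}$, $q\partial_{q}P_{\alpha}$, $q\partial_{q}Q_{\alpha}$ as polynomials in the values of $\zeta$, $\zeta'$, $\zeta''$ at $\theta = 2\pi\alpha$ \emph{and} at $\theta = 4\pi\alpha$.

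The main obstacle is the final algebraic reduction to \eqref{deqe}-\eqref{deqq}. The Weierstrass equation $(\zeta''(\theta\mid q))^{2} = -4(\zeta'(\theta\mid q))^{3} + g_{2}(q)\zeta'(\theta\mid q) - g_{3}(q)$ removes quadratic occurrences of $\zeta''$; the shifted parameter $1-2\alpha$ appearing in \eqref{deqp}-\eqref{deqq} emerges from the values of $\zeta$ and $\wp$ at $\theta = 4\pi\alpha$ through the addition and duplication formulas for $\wp$ and $\zeta$, together with the parities $\sin(2n\pi(1-2\alpha)) = -\sin(4n\pi\alpha)$ and $\cos(2n\pi(1-2\alpha)) = \cos(4n\pi\alpha)$ that convert the resulting Lambert series back to $e_{1-2\alpha}$ and $P_{1-2\alpha}$. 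Tracking the coefficients $\cot(\pi\alpha)$, $\cot(2\pi\alpha)$, $\csc^{2}(\pi\alpha)$, $\csc^{2}(2\pi\alpha)$ through these manipulations and verifying that the $g_{2}(q)$, $g_{3}(q)$, $E_{2}(q)$ contributions cancel to leave the precise right-hand sides in \eqref{deqe}-\eqref{deqq} is the essential computational challenge.
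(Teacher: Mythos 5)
You should first note that the paper does not actually prove Theorem \ref{mainthm} in the text: the system is imported from \cite{huber_proc}, and the only indication of method is the remark that the proof ``is accomplished by incorporating a linear shift of $\alpha$ in the elliptic function identities that Ramanujan used'' to prove \eqref{rdiff1}, together with the supporting identities \eqref{ell1}, \eqref{stickel}, and Lemma \ref{lemp}. Measured against that, your plan is the right one in spirit and your dictionary is correct: with $F(z) = (\theta_{1}'/\theta_{1})(z\mid q)$ one has $F(\pi\alpha) = e_{\alpha}\cot(\pi\alpha)$, $F'(\pi\alpha) = -P_{\alpha}\csc^{2}(\pi\alpha)$, $F''(\pi\alpha) = 2Q_{\alpha}\cot(\pi\alpha)\csc^{2}(\pi\alpha)$, and the heat equation $\partial_{z}^{2}\theta_{1} = -8\,q\partial_{q}\theta_{1}$ gives $-8q\partial_{q}F = F'' + 2FF'$, which is exactly \eqref{deqe}; this is the $\alpha$-deformed version of Ramanujan's identity \eqref{ell1}. (Your relation between $2\zeta(2z\mid q)$ and $\partial_{z}\log\theta_{1}$ has the sign of the $zE_{2}/3$ term reversed, but that is cosmetic.)

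The one substantive misstep is your proposal to eliminate the higher $z$-derivatives via the Weierstrass cubic $(\wp')^{2} = 4\wp^{3} - g_{2}\wp - g_{3}$. That identity drags in $g_{2}$ and $g_{3}$, i.e.\ $E_{4}(q)$ and $E_{6}(q)$, which never appear in \eqref{deqp}--\eqref{deqq}, and you are left hoping for a cancellation you cannot exhibit. The identity actually needed is the duplication formula for the Weierstrass zeta function, $\zeta(2u) - 2\zeta(u) = \wp''(u)/\bigl(2\wp'(u)\bigr)$, which is the confluent case $a = b = u$, $c = -2u$ of the Frobenius--Stickelberger relation \eqref{stickel} already displayed in the paper (equivalently, the $v = 2u$ case of $\wp'(u)/(\wp(u)-\wp(v)) = \zeta(u-v)+\zeta(u+v)-2\zeta(u)$). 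It is this formula, not the cubic, that converts $F'''(\pi\alpha)$ into the products $e_{1-2\alpha}Q_{\alpha}$ and $e_{\alpha}Q_{\alpha}$ of \eqref{deqp}, and, upon one further $z$-differentiation, produces the terms $P_{1-2\alpha}Q_{\alpha}$, $e_{1-2\alpha}^{2}Q_{\alpha}$, $e_{\alpha}e_{1-2\alpha}Q_{\alpha}$ of \eqref{deqq}; the passage $\theta \mapsto 2\theta$ corresponding to $\alpha \mapsto 1-2\alpha$ is the ``linear shift'' the paper alludes to, and your parity observations for $\sin$ and $\cos$ correctly justify that relabeling. With the cubic replaced by the duplication formula, your outline becomes a workable proof and coincides with the method of \cite{huber_proc}.
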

In Section 2, we formulate relevant elliptic function identities in terms of these parameters. In Section 3, we write the quotients $x(q)$,
$y(q)$, and $z(q)$ as linear combinations of $e_{1/7}(q)$,
$e_{2/7}(q)$, and $e_{3/7}(q)$. Section 4 culminates in a proof of
Theorem \ref{d_sept} and introduces new a set of symmetric parameterizations for
Eisenstein series of weight four and six in
terms of the septic parameters \eqref{xy}--\eqref{z}. These formulas
constitute a septic
reprisal of symmetric quintic
parameterizations for Eisenstein series from \cite{qeis}.

\section{Elliptic modular preliminaries}
The purpose of this section is to introduce results from
the theory of elliptic modular functions necessary for our further
work. A critical component in our proof of Theorem \ref{d_sept} is Lemma \ref{lem_expan}, where Lambert series representations
are derived for polynomials of degree two in the parameters
$e_{\alpha}(q)$. These rather unconventional parameters are
customarily expressed in terms of the logarithmic derivative of the Jacobi theta function
\begin{align}
    \theta_{1}(z \mid q) = -iq^{1/8} \sum_{n=-\infty}^{\infty} (-1)^{n} q^{n(n+1)/2} e^{(2n+1)iz}
  \end{align}
given by the equivalent representations \cite[p. 489]{witwat}
\begin{align}
 \frac{\theta_{1}'}{\theta_{1}} ( z\mid q)  &= \cot z + 4
 \sum_{n=1}^{\infty} \frac{q^{n}}{1 - q^{n}} \sin 2 n z \\
&= i - 2 i \sum_{n=1}^{\infty} \frac{q^{n} e^{2 i z}}{ 1 - q^{n} e^{2 i z}} + 2
 i \sum_{n=0}^{\infty} \frac{q^{n} e^{- 2 i z}}{1 - q^{n} e^{- 2 i
     z}}. \label{another}
\end{align}
We will also require the familiar identity for the Jacobi
theta function \cite[p. 518]{witwat}
  \begin{align} \label{eq:13}
    \left ( \frac{\theta_{1}'}{\theta_{1}} \right )' (x \mid q) -
    \left ( \frac{\theta_{1}'}{\theta_{1}} \right )' (y \mid q) =
    \frac{\theta_{1}'(0 \mid q)^{2} \theta_{1}(x - y \mid q) \theta_{1}(x+y \mid q)}{\theta_{1}^{2}(x \mid q) \theta_{1}^{2}(y \mid q)}.
  \end{align}
In order to relate the parameters $e_{\alpha}(q)$ to the quotients of theta
functions appearing in \eqref{xy}--\eqref{z}, we will logarithmically differentiate
the infinite product representations 
  \begin{align} \label{xyz_prod}
     x(q) &= \frac{q (q^{7};q^{7})_{\infty}^{2}(q^{2}; q^{7})_{\infty} (q^{5}; q^{7})_{\infty}}{(q^{3};q^{7})_{\infty}^{2} (q^{4}; q^{7})_{\infty}^{2}}, \quad 
  y(q) = \frac{q(q^{7}; q^{7})_{\infty}^{2}(q;q^{7})_{\infty}(q^{6}; q^{7})_{\infty}}{(q^{2};q^{7})_{\infty}^{2}(q^{5}; q^{7})_{\infty}^{2}}, \\
   & \qquad \quad \qquad \qquad z(q) = \frac{(q^{7}; q^{7})_{\infty}^{2}(q^{3};q^{7})_{\infty}(q^{4}; q^{7})_{\infty}}{(q;q^{7})_{\infty}^{2}(q^{6}; q^{7})_{\infty}^{2}}.
  \end{align}
 These product formulations are consequences of the Jacobi triple
 product formula \cite{witwat}
\begin{align} \label{jtp}
  \theta_{1}(z \mid q) = - i q^{1/8} e^{iz} (q;q)_{\infty} (q e^{2 i z}; q)_{\infty} (e^{- 2 i z}; q)_{\infty}.
\end{align}
By differentiating \eqref{jtp} at the origin, we obtain 
\begin{align} \label{yi1}
  \theta_{1}'(q) := \lim_{z \to 0} \frac{\theta_{1}(z \mid q)}{z} = 2 q^{1/8}(q;q)_{\infty}^{3}.  
\end{align}
We may also apply \eqref{jtp} to derive the subsequently useful product representations
\begin{align}
  \label{eq:88}
  \theta_{1}( \pi \tau \mid q^{7}) &= i q^{3/8} (q ; q^{7})_{\infty}
  (q^{6}; q^{7})_{\infty} (q^{7}; q^{7})_{\infty}, \\ 
  \theta_{1}( 2\pi \tau \mid q^{7}) &= i q^{-1/8} (q^{2} ; q^{7})_{\infty}
  (q^{5}; q^{7})_{\infty} (q^{7}; q^{7})_{\infty}, \label{eq:89} \\ 
  \theta_{1}( 3\pi \tau \mid q^{7}) &= i q^{-3/8} (q^{3} ; q^{7}_{\infty}
  (q^{4}; q^{7})_{\infty} (q^{7}; q^{7})_{\infty}. \label{eq:90}
\end{align}

Our proof of Theorem \ref{d_sept} will employ a number of classical elliptic function identities for the Weierstrass $\zeta$-function defined by \eqref{wzdef} and the Weierstrass $\wp$-function determined by $(d/dz) \zeta( z \mid q) = - \wp(z \mid q).$
In particular, we will make use of an identity prominent in Ramanujan's proof of the differential system for Eisenstein series \eqref{rdiff1}. Ramanujan used elementary trigonometric identities to prove \cite[p.~135]{hardy} (cf. \cite{spiritraman})
\begin{align} \label{ell1} 
  \left ( \zeta(z \mid q) - \frac{z E_{2}(q)}{12} \right )^{2} &= \wp(z \mid q)  - \frac{1}{6} + 4\sum_{n=1}^{\infty} \frac{q^{2n}\cos(n z)}{(1 - q^{2n})^{2}}.
\end{align}
Identity \eqref{ell1} played a key role in Ramanujan's proof \cite{Raman1} of
\eqref{rdiff1} and facilitates the main results of the present paper by inducing Lambert series expansion for
$e_{\alpha}^{2}(q)$. Corresponding Lambert expansions for $e_{\alpha}(q) e_{1 - 2 \alpha}(q)$ will depend on the Frobenius-Stickelberger pseudo-addition formula \cite{fstick}, \cite[p.~459]{witwat} for the Weierstrass $\zeta$-function
\begin{align} \label{stickel}
\{ \zeta( a ) + \zeta(b) + \zeta(c) \}^{2} = \zeta'(a) + \zeta'(b) + \zeta'(c), \qquad a + b + c = 0.
\end{align}
The next lemma translates \eqref{ell1}--\eqref{stickel} into forms involving the series from \eqref{ser}.
\begin{lem} \label{lemp} Let $e_{\alpha}$ and $P_{\alpha}$ be defined by \eqref{ser}, and let $E_{2}(q)$ denote the normalized Eisenstein series of weight $2$. Then \label{lem_expan}
  \begin{align}
\label{one} e_{\alpha}^{2}(q) &= 1 + \frac{16}{\cot^{2}(\pi\alpha)} \sum_{n=1}^{\infty}  \frac{q^{n} \cos(2 \pi \alpha n)}{(1 - q^{n})^{2}} + \frac{8}{\cot^{2}(\pi \alpha)} \sum_{n=1}^{\infty} \frac{n q^{n}}{1 - q^{n}} \\ & \qquad \qquad  \qquad - \frac{8}{\cot^{2}(\pi \alpha)} \sum_{n=1}^{\infty} \frac{q^{n} n\cos(2 n \pi \alpha)}{1 - q^{n}}, \nonumber \\
& \label{two} \Bigl (  \cot(\pi ( 1 - 2 \al))  e_{1 - 2 \alpha}(q) + 2 \cot ( \pi \alpha)e_{\alpha}(q) \Bigr )^{2} \\ & \qquad \qquad \qquad = \csc^{2} ((1 - 2 \alpha) \pi) P_{1 - 2 \alpha}(q) + 2 \csc^{2} ( \pi \al ) P_{\alpha}(q) - E_{2}(q). \nonumber
\end{align}
\end{lem}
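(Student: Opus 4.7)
The plan is to derive both identities as direct specializations of the classical elliptic identities \eqref{ell1} and \eqref{stickel}, with the argument of the Weierstrass $\zeta$- and $\wp$-functions taken at integer multiples of $2\pi\alpha$, and with the Lambert series in \eqref{ser} used to interpret the resulting expressions. The unifying preliminary observation is that, for $\theta = 2\pi\alpha$, the ``Ramanujan modification'' of the zeta function from \eqref{wzdef} collapses cleanly:
\[
 \zeta(2\pi\alpha \mid q) - \frac{2\pi\alpha\, E_{2}(q)}{12} \;=\; \tfrac{1}{2}\cot(\pi\alpha)\,e_{\alpha}(q),
\]
because the $\theta/12$ and $-2\theta\sum nq^{n}/(1-q^{n})$ pieces of \eqref{wzdef} reassemble into $\theta E_{2}(q)/12$, while the remaining trigonometric sum is precisely $\tfrac{1}{2}\cot(\pi\alpha)$ times the series defining $e_{\alpha}$ in \eqref{ser}. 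Differentiating \eqref{wzdef} in $\theta$, recalling $\wp = -\zeta'$, and isolating the Lambert sum that defines $P_{\alpha}$ gives the companion expansion
\[
 \wp(2\pi\alpha\mid q) \;=\; \tfrac{1}{4}\csc^{2}(\pi\alpha)\,P_{\alpha}(q) - \tfrac{1}{12} E_{2}(q).
\]

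For \eqref{one}, I would substitute $z = 2\pi\alpha$ into Ramanujan's identity \eqref{ell1}. The left side becomes $\tfrac{1}{4}\cot^{2}(\pi\alpha)\,e_{\alpha}^{2}(q)$ by the first displayed formula, while on the right the term $\wp(2\pi\alpha\mid q)$ is rewritten via the second displayed formula, reintroducing $P_{\alpha}$ and $E_{2}$. Expanding $P_{\alpha}$ and $E_{2}$ through their Lambert definitions in \eqref{ser} and \eqref{eis_full}, multiplying through by $4/\cot^{2}(\pi\alpha) = 4\tan^{2}(\pi\alpha)$, and using $\sec^{2}(\pi\alpha) - \tan^{2}(\pi\alpha) = 1$ to absorb the constant terms produces exactly the Lambert series decomposition of $e_{\alpha}^{2}(q)$ claimed in \eqref{one}.

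For \eqref{two}, I would apply the Frobenius--Stickelberger relation \eqref{stickel} to the triple $(a,b,c) = (2\pi\alpha,\, 2\pi\alpha,\,-4\pi\alpha)$, which satisfies $a+b+c=0$. Since $\zeta(\cdot\mid q)$ is odd in its first argument, the left side becomes $\{2\zeta(2\pi\alpha\mid q) - \zeta(4\pi\alpha\mid q)\}^{2}$. The key cancellation is that, when the two zeta values are written out via \eqref{wzdef}, both the linear-in-$\theta$ pieces and the $\theta$-weighted Lambert sums cancel exactly in the combination $2\zeta(2\pi\alpha) - \zeta(4\pi\alpha)$, leaving
\[
 2\zeta(2\pi\alpha\mid q) - \zeta(4\pi\alpha\mid q) = \cot(\pi\alpha)\,e_{\alpha}(q) - \tfrac{1}{2}\cot(2\pi\alpha)\,e_{2\alpha}(q).
\]
The invariances $e_{\alpha} = e_{-\alpha}$ and $e_{\alpha+1} = e_{\alpha}$ built into \eqref{ser} give $e_{2\alpha} = e_{1-2\alpha}$, and $\cot(2\pi\alpha) = -\cot(\pi(1-2\alpha))$, so the left side of \eqref{two} equals four times the square above. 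The right side of \eqref{stickel} expresses as a linear combination of $\wp(2\pi\alpha\mid q)$ and $\wp(4\pi\alpha\mid q)$; substituting the $\wp$-expansion and invoking $P_{2\alpha} = P_{1-2\alpha}$ together with $\csc^{2}(2\pi\alpha) = \csc^{2}(\pi(1-2\alpha))$ then delivers the right side of \eqref{two}.

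The main obstacle is not conceptual but arithmetic: verifying that the linear-in-$\theta$ contributions and the $\sum nq^{n}/(1-q^{n})$ terms in \eqref{wzdef} cancel in the specific combinations $\zeta(2\pi\alpha) - \tfrac{\pi\alpha}{6}E_{2}$ and $2\zeta(2\pi\alpha) - \zeta(4\pi\alpha)$, carefully tracking the trigonometric prefactors so that the advertised coefficients $16/\cot^{2}(\pi\alpha)$, $8/\cot^{2}(\pi\alpha)$, $2\csc^{2}(\pi\alpha)$, and $\csc^{2}(\pi(1-2\alpha))$ emerge, and consistently invoking the $\alpha \mapsto 1-2\alpha$ symmetries of $e_{\alpha}$, $P_{\alpha}$, and the trigonometric functions.
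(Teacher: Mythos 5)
Your proposal is correct and follows essentially the same route as the paper: both proofs specialize Ramanujan's identity \eqref{ell1} and the Frobenius--Stickelberger formula \eqref{stickel} (the latter with the triple $(2\pi\alpha,\,2\pi\alpha,\,-4\pi\alpha)$, which is exactly what underlies the paper's recast identity \eqref{ha}) and then translate the resulting cotangent and Lambert series into the parameters $e_{\alpha}$, $P_{\alpha}$, and $E_{2}$. Your explicit intermediate formulas $\zeta(2\pi\alpha\mid q)-\tfrac{2\pi\alpha}{12}E_{2}(q)=\tfrac12\cot(\pi\alpha)e_{\alpha}(q)$ and $\wp(2\pi\alpha\mid q)=\tfrac14\csc^{2}(\pi\alpha)P_{\alpha}(q)-\tfrac{1}{12}E_{2}(q)$ are just a slightly more organized bookkeeping of the same computation.
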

\begin{proof}
Recast \eqref{ell1} in the form 
\begin{align} \label{ramtrig1}
&  \left ( \frac{1}{4} \cot \theta + \sum_{n=1}^{\infty} \frac{q^{n} \sin ( 2 n \theta)}{ 1 - q^{n}} \right )^{2} \\ &\qquad = \left ( \frac{1}{4} \cot \theta \right )^{2} + \sum_{n=1}^{\infty} \frac{q^{n} \cos (2 n \theta)}{(1 - q^{n})^{2}} + \frac{1}{2} \sum_{n=1}^{\infty} \frac{n q^{n}}{1 - q^{n}} ( 1 - \cos ( 2 n \theta)). \nonumber
\end{align} 
Equation \eqref{one} follows from setting $\theta = 2\pi \alpha$ in
\eqref{ramtrig1}. Identity \eqref{stickel} takes the form
\begin{align} \label{ha}
  & \left ( \frac{1}{2} \cot \theta - \cot \frac{\theta}{2} + \sum_{n=1}^{\infty} \frac{q^{n} (2 \sin(2 n \theta) - 4 \sin(n \theta))}{1 - q^{n}} \right )^{2} \\ & \qquad = \frac{1}{4} \csc^{2} \theta + \frac{1}{2} \csc^{2} \frac{\theta}{2} - \sum_{n=1}^{\infty} \frac{n q^{n}(2 \cos(2 n \theta) + 4 \cos(n \theta))}{1 - q^{n}}. \nonumber
\end{align}
Equation \eqref{two} may be obtained by setting $\theta = 2\pi \alpha$ in \eqref{ha}.
\end{proof}

\begin{lem} \label{ba} Let $e_{\alpha}(q)$ be defined by
  \eqref{ser}. Then 
  \begin{align} \label{db1}
e_{\alpha}^{2}(q) &=  1 + \sum_{n=1}^{\infty} \frac{\delta_{\alpha}(n)n q^{n}}{1 - q^{n}}  + \sum_{n=1}^{\infty} \frac{\lambda_{\alpha}(n) q^{n}}{(1 - q^{n})^{2}}, \\ 
  e_{\alpha}(q) e_{1 - 2 \alpha}(q) &= 1 + \sum_{n=1}^{\infty} \frac{\kappa_{\alpha}(n)n q^{n}}{1 - q^{n}}  + \sum_{n=1}^{\infty} \frac{\mu_{\alpha}(n) q^{n}}{(1 - q^{n})^{2}} , \label{db2}
  \end{align}
where 
\begin{align*}
\delta_{\alpha}(q)  = 16 \tan ^2(\pi  \alpha) &\sin ^2(\pi  n \alpha
), \qquad \lambda_{\alpha}(n) = 16 \tan ^2(\pi  \alpha) \cos (2 \pi
n \alpha ), \\ 
 \kappa_{\alpha}(n) &= 8 \tan (\pi  \alpha) \tan (2 \pi  \alpha ) \sin
 ^2(\pi  n \alpha ), \\ 
 \mu_{\alpha}(n)  = 4\tan &(\pi  \alpha) \tan (2 \pi  \alpha) \Bigl (4
 \cos  (2 \pi  n \alpha )+\cos(4 \pi n \alpha ) \Bigr ).
\end{align*}
\end{lem}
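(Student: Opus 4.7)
The plan is to derive both identities directly from Lemma \ref{lem_expan} by purely algebraic manipulations, without appealing to any new elliptic function machinery. The key observation is that \eqref{one} already has exactly the right shape for \eqref{db1}, and \eqref{two} can be solved for the cross-term $e_{\alpha} e_{1-2\alpha}$.

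First, for \eqref{db1}, I would rewrite the right-hand side of \eqref{one} by using $1/\cot^2(\pi\alpha) = \tan^2(\pi\alpha)$ and combining the two Lambert series of type $nq^n/(1-q^n)$ via $1 - \cos(2\pi n\alpha) = 2\sin^2(\pi n\alpha)$. This immediately produces
\begin{align*}
e_\alpha^2(q) = 1 + \sum_{n=1}^{\infty} \frac{16\tan^2(\pi\alpha)\sin^2(\pi n\alpha)\, n q^n}{1-q^n} + \sum_{n=1}^{\infty} \frac{16\tan^2(\pi\alpha)\cos(2\pi n\alpha)\, q^n}{(1-q^n)^2},
\end{align*}
which matches $\delta_\alpha$ and $\lambda_\alpha$ as stated.

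For \eqref{db2}, I would expand the square on the left of \eqref{two} and solve for the cross-term:
\begin{align*}
4\cot(\pi(1-2\alpha))\cot(\pi\alpha)\, e_\alpha e_{1-2\alpha} &= \csc^2(\pi(1-2\alpha)) P_{1-2\alpha} + 2\csc^2(\pi\alpha) P_\alpha - E_2 \\
&\quad - \cot^2(\pi(1-2\alpha))\, e_{1-2\alpha}^2 - 4\cot^2(\pi\alpha)\, e_\alpha^2.
\end{align*}
I would then substitute the Lambert expansions for $e_\alpha^2$ and $e_{1-2\alpha}^2$ just obtained, along with the series definitions of $P_\alpha$, $P_{1-2\alpha}$, and $E_2$. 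Using the reductions $\sin(\pi(1-2\alpha)) = \sin(2\pi\alpha)$ and $\cos(2\pi n(1-2\alpha)) = \cos(4\pi n\alpha)$, and then the Pythagorean identity $\csc^2 - \cot^2 = 1$, I expect the constant terms to reduce cleanly to $4\cot(\pi(1-2\alpha))\cot(\pi\alpha)$.

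The main obstacle — though more tedious than deep — will be verifying that the coefficients of $nq^n/(1-q^n)$ and $q^n/(1-q^n)^2$ collapse to the claimed closed forms. For the former, the contributions assemble as $-64\sin^2(\pi n\alpha) - 16\sin^2(2\pi n\alpha) + 24 - 16\cos(2\pi n\alpha) - 8\cos(4\pi n\alpha)$, and applying $\sin^2 x = (1 - \cos 2x)/2$ collapses this to $-32\sin^2(\pi n\alpha)$, which on division by $4\cot(\pi(1-2\alpha))\cot(\pi\alpha) = -4\cot(2\pi\alpha)\cot(\pi\alpha)$ yields $\kappa_\alpha(n) = 8\tan(\pi\alpha)\tan(2\pi\alpha)\sin^2(\pi n\alpha)$. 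For the latter, only $e_\alpha^2$ and $e_{1-2\alpha}^2$ contribute, giving $-64\cos(2\pi n\alpha) - 16\cos(4\pi n\alpha)$, which divides through to $\mu_\alpha(n) = 4\tan(\pi\alpha)\tan(2\pi\alpha)(4\cos(2\pi n\alpha) + \cos(4\pi n\alpha))$. Thus the whole argument is an extraction of coefficients from the two identities of Lemma \ref{lem_expan} followed by elementary trigonometric bookkeeping.
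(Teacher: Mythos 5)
Your proposal is correct and follows essentially the same route as the paper: \eqref{db1} is read off from \eqref{one} via $1-\cos(2\pi n\alpha)=2\sin^{2}(\pi n\alpha)$, and \eqref{db2} is obtained by expanding the square in \eqref{two}, isolating the cross term $e_{\alpha}e_{1-2\alpha}$, and substituting the expansions from \eqref{db1} together with the series for $P_{\alpha}$, $P_{1-2\alpha}$, and $E_{2}$. Your explicit coefficient bookkeeping (collapsing to $-32\sin^{2}(\pi n\alpha)$ and $-64\cos(2\pi n\alpha)-16\cos(4\pi n\alpha)$ before dividing by $-4\cot(2\pi\alpha)\cot(\pi\alpha)$) checks out and matches the paper's \eqref{ie}--\eqref{sl1}.
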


\begin{proof}
Equation \eqref{db1} follows from \eqref{one} and elementary trigonometric identities.
To prove \eqref{db2}, expand the left side of \eqref{two} and subtract the squared terms from both sides to obtain
\begin{align} \label{ie}
 e_{1 - 2 \alpha}(q) e_{\alpha}(q) &= \frac{\tan(\pi \alpha) \tan (2 \pi \alpha)}{4} \Bigl ( E_{2}(q) +  \cot^{2}((1 - 2 \alpha)\pi) e_{1-2 \alpha}^{2}(q)  \\ \nonumber & \qquad   +  4 \cot^{2}(\pi \alpha) e_{\alpha}^{2}(q)  -\csc^{2} ((1 - 2 \alpha) \pi) P_{1 - 2 \alpha}(q) - 2 \csc^{2} ( \pi \al ) P_{\alpha}(q)  \Bigr ).
\end{align}
Next, apply \eqref{db1} on the right side of \eqref{ie} to the terms $$\cot^{2}((1 - 2 \alpha)\pi) e_{1-2 \alpha}^{2}(q)\qquad \hbox{and} \qquad 4 \cot^{2}(\pi \alpha) e_{\alpha}^{2}(q).$$ Thus, \eqref{ie} may be expressed in the form
\begin{align} \label{sl1}
 e_{1 - 2 \alpha}(q) e_{\alpha}(q) &= \sum _{n=1}^{\infty} \frac{8
   \tan (2 \pi \alpha ) \tan (\pi  \alpha) \sin ^2(\pi  n \alpha ) n
   q^{n}}{1-q^n} \\ &+\sum _{n=1}^{\infty} \frac{4\tan (2 \pi \alpha )
   \tan (\pi  \alpha) \bigl (4 \cos (2 \pi   n\alpha ) + \cos (4 \pi n
   \alpha ) \bigr ) q^n}{ \left(1-q^n\right)^2}. \nonumber
\end{align}
Comparing the right side of \eqref{sl1} with \eqref{db2}, we arrive at the claimed identity.
\end{proof}

\begin{lem} \label{mip}
  For any sequence $\{a_{n}\}_{n=1}^{\infty}$ periodic modulo seven, such that the the series are absolutely convergent, we have 
  \begin{align} \label{mi}
    \sum_{n=1}^{\infty} \frac{a_{n} q^{n}}{(1 - q^{n})^{2}} = \sum_{n=1}^{\infty} \frac{n }{1 - q^{7n}} \sum_{m=1}^{7} a_{m} q^{nm}.
  \end{align}
\end{lem}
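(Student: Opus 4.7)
The plan is to expand both sides as power series in $q$ and verify that they agree term by term, exploiting the mod-$7$ periodicity of $\{a_n\}$ to reindex the resulting double sum. The argument is almost entirely mechanical; the only care needed is in justifying the interchange of summations, for which the stated absolute convergence hypothesis suffices.

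First I would rewrite the right-hand side of \eqref{mi} by inserting the geometric expansion $\dfrac{1}{1-q^{7n}} = \sum_{k=0}^{\infty} q^{7nk}$ to obtain
\begin{align*}
\sum_{n=1}^{\infty} \frac{n}{1-q^{7n}} \sum_{m=1}^{7} a_{m} q^{nm} = \sum_{n=1}^{\infty} n \sum_{k=0}^{\infty} \sum_{m=1}^{7} a_{m} q^{n(7k+m)}.
\end{align*}
Next I would re-index the inner double sum by setting $j = 7k + m$, where $k \geq 0$ and $1 \leq m \leq 7$. As $(k,m)$ ranges over this set, $j$ ranges bijectively over the positive integers, and the periodicity hypothesis $a_{n+7} = a_{n}$ gives $a_{m} = a_{7k+m} = a_{j}$. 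Hence the right-hand side becomes
\begin{align*}
\sum_{n=1}^{\infty} n \sum_{j=1}^{\infty} a_{j} q^{nj}.
\end{align*}

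Finally I would interchange the order of summation, permissible by the stated absolute convergence, and apply the elementary identity $\sum_{n=1}^{\infty} n x^{n} = \dfrac{x}{(1-x)^{2}}$ with $x = q^{j}$ to arrive at
\begin{align*}
\sum_{j=1}^{\infty} a_{j} \sum_{n=1}^{\infty} n q^{nj} = \sum_{j=1}^{\infty} \frac{a_{j} q^{j}}{(1 - q^{j})^{2}},
\end{align*}
which is the left-hand side of \eqref{mi}.

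There is no real obstacle here: the identity is essentially a bookkeeping statement disguising the trivial fact that any Lambert-type sum whose coefficients are periodic modulo $N$ can be aggregated over residue classes mod $N$. The one subtle point to flag in the write-up is the justification for the reordering — it is exactly the absolute convergence assumption in the hypothesis that allows Fubini-style manipulation of the triply indexed sum over $(n,k,m)$.
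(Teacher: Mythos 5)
Your proof is correct and is essentially the paper's own argument run in reverse: the paper starts from the left side, expands $q^{n}/(1-q^{n})^{2}=\sum_{k\ge 1}k\,q^{nk}$, splits $n$ into residue classes modulo seven, and sums the resulting geometric series to reach the right side, whereas you expand $(1-q^{7n})^{-1}$ on the right and reassemble the residue classes into a single index to reach the left. The two directions use identical ingredients (geometric expansion, mod-$7$ reindexing via periodicity, and Fubini justified by absolute convergence), so there is nothing further to add.
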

\begin{proof}
  To prove \eqref{mi}, express the sum on the left as the
  derivative of a geometric series and invert the order of summation
  to yield
  \begin{align}
\label{eq:144}    \sum_{n=1}^{\infty} \frac{a_{n} q^{n}}{(1 - q^{n})^{2}}
&=\sum_{n=1}^{\infty} a_{n} q^{n} \frac{d}{dq^{n}} \left ( \frac{1}{1
    - q^{n}} \right ) 
q^{n(k-1)} 
 =  \sum_{m=1}^{7} a_{m} \sum_{k=1}^{\infty} kq^{mk}
\sum_{n=0}^{\infty} q^{7nk}. 
  \end{align}
By expanding the innermost sum of \eqref{eq:144} as a geometric series,
we obtain \eqref{mi}.
\end{proof}

\section{Elliptic interpolation of septic theta functions}

We now apply the results of the Section 2 to study the functions $x(q), y(q)$, and $z(q)$. 
Our goal in the next Lemma is to obtain representations $x(q)$,
$y(q)$, and $z(q)$ as linear combinations of logarithmic derivatives of theta functions denoted $e_{1/7}(q), e_{2/7}(q), e_{3/7}(q)$. 
\begin{lem} \label{ba1}
  \begin{align}
    x(q) &= \alpha_{1} e_{1/7}(q) + \alpha_{2} e_{2/7}(q) + \alpha_{3}e_{3/7}(q), \label{x_fund}\\ 
    y(q) &= \beta_{1} e_{1/7}(q) + \beta_{2} e_{2/7}(q) + \beta_{3}e_{3/7}(q), \\ 
    z(q) &= \gamma_{1} e_{1/7}(q) + \gamma_{2} e_{2/7}(q) + \gamma_{2}e_{3/7}(q),  
  \end{align}
where 
\begin{align}
  \alpha_{1} = \frac{1}{14} \Bigl(1-3 \cos &\bigl(\frac{3 \pi }{14}\bigr) \csc \bigl(\frac{\pi }{7}\bigr)\Bigr),\quad
\alpha_{2} = \frac{1}{14} \Bigl(1+6 \sin \bigl(\frac{3 \pi }{14}\bigr)\Bigr), \\
\alpha_{3} &= \frac{1}{14} \Bigl(1-6 \sin \bigl(\frac{ \pi }{14}\bigr)\Bigr),
\end{align}
\begin{align}
  \beta_{1} &= \frac{1}{56} \left(4-\csc \left(\frac{\pi }{14}\right) \left(2+\csc \left(\frac{3 \pi }{14}\right)\right)\right), \\ 
\beta_{2} &= \frac{1}{14} \left(1+4 \sin \left(\frac{3 \pi }{14}\right)-2 \cos \left(\frac{\pi }{7}\right)\right), \\ 
\beta_{3} &= \frac{1}{28} \left(2-4 \sin \left(\frac{\pi }{14}\right)+\csc \left(\frac{3 \pi }{14}\right)\right),
\end{align}
\begin{align}
  \gamma_{1} &= \frac{1}{28} \left(4+\csc \left(\frac{\pi }{14}\right) \left(3+\csc \left(\frac{3 \pi }{14}\right)\right)\right), \\ 
\gamma_{2} &= \frac{1}{7} \left(1-5 \sin \left(\frac{3 \pi }{14}\right)+3 \cos \left(\frac{\pi }{7}\right)\right), \\ 
\gamma_{3} &= \frac{1}{28} \left(4+8 \sin \left(\frac{\pi }{14}\right)-3 \csc \left(\frac{3 \pi }{14}\right)\right).
\end{align}
\end{lem}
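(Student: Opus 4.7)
The plan is to reduce both sides of each asserted identity to a common theta-function language and then compare leading $q$-series coefficients. The starting observation is that substituting $z = \pi\alpha$ into the Lambert series
$$\frac{\theta_1'}{\theta_1}(z \mid q) = \cot z + 4 \sum_{n=1}^{\infty} \frac{q^n}{1-q^n} \sin(2 n z)$$
identifies $e_\alpha(q) = \tan(\pi\alpha) \cdot (\theta_1'/\theta_1)(\pi\alpha \mid q)$, so each basis function $e_{k/7}(q)$ is, up to a normalizing constant, the logarithmic derivative of a Jacobi theta function evaluated at a rational multiple of $\pi$.

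Next I will combine the product representations \eqref{xyz_prod} with the triple-product formulas \eqref{eq:88}--\eqref{eq:90} and with \eqref{yi1} to rewrite $x(q)$, $y(q)$, and $z(q)$ as quotients of the form $\theta_1(j\pi\tau \mid q^7)/\theta_1^2(k\pi\tau \mid q^7)$, multiplied by $\theta_1'(q^7)$ and a simple $q$-power. This places the two sides of each asserted identity in the same theta-quotient framework.

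To pin down the coefficients $\alpha_i, \beta_i, \gamma_i$, I expand $e_{1/7}, e_{2/7}, e_{3/7}$ via their defining Lambert series \eqref{ser} and expand $x(q), y(q), z(q)$ via their infinite products, then match the coefficients of $q^0, q^1, q^2$. This produces a $3 \times 3$ linear system (with the same coefficient matrix for all three identities, and three distinct right-hand sides) whose entries are built from $\sin(k\pi/7)$ and $\cos(k\pi/7)$. Inverting the system yields closed-form trigonometric expressions for the nine constants.

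The main obstacle, I anticipate, is the trigonometric bookkeeping: recasting the raw solutions of the linear system into the compact closed forms recorded in the statement requires cyclotomic identities among $\sin(k\pi/7)$ and $\cos(k\pi/7)$ for $k = 1, 2, 3$, including relations that force $\alpha_1 + \alpha_2 + \alpha_3 = 0$ (reflecting the vanishing of $x(q)$ at $q = 0$) and analogous constraints for $\beta_i$ and $\gamma_i$. Once these simplifications are in hand, agreement of the full $q$-expansions follows because both sides lie in the finite-dimensional space of weight-one theta quotients on a suitable level-seven congruence subgroup, in which $\{e_{1/7}, e_{2/7}, e_{3/7}\}$ is a basis, so coincidence of three leading $q$-coefficients forces coincidence of the full series.
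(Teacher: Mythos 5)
Your proposal diverges from the paper's argument in a way that leaves a genuine gap at the decisive step. The paper does not determine the nine constants by matching finitely many $q$-coefficients: it first establishes, via the three exact theta identities of Liu \eqref{yu1}--\eqref{yu3} together with the expansion \eqref{another}, that $x(q)$, $y(q)$, $z(q)$ admit Lambert expansions $\sum a_{n}q^{n}/(1-q^{n})$ whose coefficient sequences are periodic and odd modulo $7$, as recorded in \eqref{gh1}. Once that is known, the finite Fourier expansion of an odd sequence on $\mathbb{Z}/7\mathbb{Z}$ in terms of $\sin(2\pi m n/7)$, $m=1,2,3$, expresses each of $x$, $y$, $z$ \emph{exactly} as a linear combination of $e_{1/7}$, $e_{2/7}$, $e_{3/7}$, with $\alpha_{j}=\ell_{j}/(4\tan(j\pi/7))$; the identity then holds coefficient by coefficient for every $n$, with no truncation and no appeal to modularity.

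Your substitute for this step --- that both sides lie in a finite-dimensional space of weight-one forms on a level-seven group in which $\{e_{1/7},e_{2/7},e_{3/7}\}$ is a basis, so that agreement of three leading coefficients forces agreement of the full series --- is essentially the content of the lemma, and you give no argument for it. To make it rigorous you would need to (i) specify the congruence subgroup and verify that $x$, $y$, $z$ (weight-one eta-type quotients whose regularity at every cusp must be checked) and the $e_{k/7}$ all belong to the corresponding space $M_{1}$; (ii) prove that this space has dimension three and is spanned by the $e_{k/7}$, which is delicate in weight one where the usual dimension formulas do not apply; and (iii) verify that the $3\times 3$ matrix of initial $q$-coefficients of the $e_{k/7}$ is invertible, which you assume implicitly when you ``invert the system.'' Items (i)--(ii) together are at least as hard as the lemma itself. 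Finally, your opening identification $e_{\alpha}(q)=\tan(\pi\alpha)\,(\theta_{1}'/\theta_{1})(\pi\alpha\mid q)$, while correct, does not by itself place the two sides ``in the same theta-quotient framework'': the quotients coming from \eqref{xyz_prod} and \eqref{eq:88}--\eqref{eq:90} involve the nome $q^{7}$ and arguments $j\pi\tau$ proportional to the period, whereas $e_{k/7}(q)$ involves the nome $q$ and the constant argument $k\pi/7$; the bridge between the two is exactly the Lambert-series/finite-Fourier step that Liu's identities supply and that your outline omits.
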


\begin{proof}
In order to obtain relevant Lambert series expansions for
the series from \eqref{xy}--\eqref{z}, we employ three theta
function identities
derived by Z.-G. Liu  \cite[pp. 67-68]{MR1767652} 
\begin{align} \label{yu1}
  q^{-1} \theta_{1}'(q^{7}) \frac{\theta_{1}(2 \pi \tau \mid q^{7})}{\theta_{1}^{2}(3 \pi \tau \mid q^{7})} &= - 2i+ \frac{\theta_{1}'}{\theta_{1}} (\pi \tau \mid q^{7}) -  \frac{\theta_{1}'}{\theta_{1}} (2\pi \tau \mid q^{7}) -2 \frac{\theta_{1}'}{\theta_{1}} (3\pi \tau \mid q^{7}), \\ 
 q^{-1/2} \theta_{1}'(q^{7}) \frac{\theta_{1}( \pi \tau \mid q^{7})}{\theta_{1}^{2}(2 \pi \tau \mid q^{7})} &= \frac{\theta_{1}'}{\theta_{1}} (\pi \tau \mid q^{7}) - 2\frac{\theta_{1}'}{\theta_{1}} (2\pi \tau \mid q^{7}) + \frac{\theta_{1}'}{\theta_{1}} (3\pi \tau \mid q^{7}), \\ 
 q^{1/2} \theta_{1}'(q^{7}) \frac{\theta_{1}(3 \pi \tau \mid q^{7})}{\theta_{1}^{2}( \pi \tau \mid q^{7})} &= 2i+ \frac{\theta_{1}'}{\theta_{1}} (\pi \tau \mid q^{7}) +  \frac{\theta_{1}'}{\theta_{1}} (2\pi \tau \mid q^{7}) + \frac{\theta_{1}'}{\theta_{1}} (3\pi \tau \mid q^{7}). \label{yu3}
\end{align}
These may be reformulated through the use of \eqref{another}, \eqref{yi1}, and
\eqref{eq:88}-\eqref{eq:90} as 
  \begin{align} \label{gh1}
    x(q) = \sum_{n=1}^{\infty} \frac{a_{n}q^{n}}{1 - q^{n}}, \qquad y(q) = \sum_{n=1}^{\infty} \frac{b_{n}q^{n}}{1 - q^{n}}, \qquad z(q) = 1 + \sum_{n=1}^{\infty} \frac{c_{n}q^{n}}{1 - q^{n}},
  \end{align}
where $a_{n}, b_{n}, c_{n}$ are periodic sequences modulo seven defined by 
\begin{align*}
  \{a_{n}\}_{n=0}^{7} = \{0, 1, -1, &-2, 2, 1, -1\}, \quad  \{b_{n}\}_{n=0}^{7} = \{0, 1, -2, 1, -1, 2, -1\}, \\  & \{c_{n}\}_{n=0}^{7} = \{0, 2, 1, 1, -1, -1, -2\}.
\end{align*}
The periodic odd sequence $a_{n}$ has the standard discrete Fourier representation \cite{zygmund}
\begin{align}
f(n) =  \sum_{m=1}^{3}\ell_{m}\sin \left ( \frac{2 \pi m x}{7} \right ), \quad \hbox{where} \quad  \ell_{m} = \frac{2}{7} \sum_{k=0}^{6} a_{k} \sin \left ( \frac{2 \pi m k}{7}\right ).
\end{align}
Thus, from \eqref{x_fund} and \eqref{gh1}, and referring to the definition of $e_{\alpha}(q)$ from \eqref{ser}, we see that the constants $\alpha_{j}$ appearing in \eqref{x_fund} are determined by $\alpha_{j} = \ell_{j}/(4 \tan( j \pi/7))$. The simplified form for each of these constants is given in Lemma \ref{ba1}. Each triple of constants $\beta_{j}$ and $\gamma_{j}$ may be similarly constructed as corresponding multiples of the finite Fourier coefficients for the sequences $b_{n}$ and $c_{n}$, respectively.
\end{proof}

To derive parameterizations for Eisenstein series necessary to prove
Theorem \ref{d_sept}, we
require a parameterization for the Hecke Eisenstein series of weight one  twisted by
the septic Jacobi symbol. This is an immediate consequence of the
formulas on line 
\eqref{gh1}. 
\begin{lem} \label{eiswt1} Let $\left (
        \frac{n}{7} \right )$ denote the Jacobi symbol modulo
  seven. Then 
  \begin{align}
    \label{eq:11}
      1+ 2 \sum_{n=1}^{\infty} \left (
        \frac{n}{7} \right ) \frac{q^{n}}{1 - q^{n}} = x(q) - y(q) + z(q). 
  \end{align}
\end{lem}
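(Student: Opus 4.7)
My plan is to reduce the identity to an elementary finite verification by appealing directly to the Lambert series representations in \eqref{gh1}. Forming the linear combination $x(q) - y(q) + z(q)$ term-by-term gives
\begin{equation*}
x(q) - y(q) + z(q) = 1 + \sum_{n=1}^{\infty} \frac{(a_n - b_n + c_n)\, q^n}{1 - q^n},
\end{equation*}
where $a_n, b_n, c_n$ are the seven-periodic sequences tabulated immediately after \eqref{gh1}, and where the constant term $1$ comes from the constant term of $z(q)$. The Jacobi symbol $\chi(n) := \left(\tfrac{n}{7}\right)$ is itself a sequence of period seven that vanishes whenever $7 \mid n$, so the lemma is equivalent to the coefficient identity $a_n - b_n + c_n = 2\chi(n)$ for every $n \geq 1$.

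The second step is to carry out this coefficient comparison on a single period. Since $a_0 - b_0 + c_0 = 0 = 2\chi(0)$, the residue class $n \equiv 0 \pmod 7$ is automatic and is precisely what allows the matching at multiples of seven. For the remaining six classes, I would record that the quadratic residues modulo seven are $\{1, 2, 4\}$ and the non-residues are $\{3, 5, 6\}$, so that $2\chi(n)$ equals $+2$ on the first set and $-2$ on the second, and then verify case-by-case from the tabulated values that $a_n - b_n + c_n$ realizes exactly this pattern for each $n \in \{1, 2, \ldots, 6\}$.

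There is no substantive obstacle: the Lambert series formulation from \eqref{gh1} has already absorbed all of the analytic work of translating the infinite-product quotients $x, y, z$ into divisor sums, and what remains is a purely arithmetic check of six small integers against six values of a character modulo seven. The only mild subtlety is the bookkeeping of the constant term and the signed combination $x - y + z$; the latter sign pattern is what is needed for the resulting periodic Fourier-style sum to collapse to twice the Jacobi symbol rather than some other character modulo seven.
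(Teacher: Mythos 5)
Your proposal is correct and is exactly the argument the paper intends: the paper dispatches this lemma as ``an immediate consequence of the formulas on line \eqref{gh1},'' i.e.\ precisely the coefficient comparison $a_n - b_n + c_n = 2\left(\frac{n}{7}\right)$ over one period that you describe. The check does go through ($a_n - b_n + c_n$ equals $2$ for $n \equiv 1,2,4$, equals $-2$ for $n \equiv 3,5,6$, and vanishes for $n \equiv 0 \pmod 7$, with the constant term $1$ supplied by $z(q)$), so nothing is missing.
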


Our derivation of further parameterizations for Eisenstein
series depend fundamentally on Klein's quartic identity \eqref{eq:55}. We therefore give an elementary proof
of \eqref{eq:55}. From \eqref{xy}-\eqref{z}, we may rephrase equation
\eqref{eq:55} in terms of the quadratic \eqref{eq:10}.
\begin{lem}
  \begin{align}
    \label{eq:10}
    x(q) y(q) - x(q) z(q) + y(q) z(q) = 0. 
  \end{align}
\end{lem}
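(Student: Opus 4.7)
The plan is to recast \eqref{eq:10} as a single Lambert series with coefficients periodic modulo $7$ and then verify that every coefficient vanishes. First I would substitute the decompositions from Lemma \ref{ba1} to expand $xy - xz + yz$ as a linear combination, with explicit trigonometric coefficients in $\alpha_j, \beta_j, \gamma_j$, of the six quadratic monomials $e_{j/7}(q)\, e_{k/7}(q)$ for $1 \le j \le k \le 3$.

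Each of the three diagonal squares $e_{j/7}^{2}(q)$ is expanded by \eqref{db1}. For the three mixed products, one uses the parity symmetry $e_{\alpha}(q) = e_{1-\alpha}(q)$ that follows at once from \eqref{ser} (since $\tan(\pi(1-\alpha)) = -\tan(\pi\alpha)$ and $\sin(2\pi n(1-\alpha)) = -\sin(2\pi n \alpha)$). In particular $e_{5/7}(q) = e_{2/7}(q)$, so each mixed product matches the template $e_{\alpha}(q)\, e_{1-2\alpha}(q)$ of \eqref{db2}: take $\alpha = 1/7$ for $e_{1/7} e_{2/7}$, $\alpha = 2/7$ for $e_{2/7} e_{3/7}$, and $\alpha = 3/7$ for $e_{3/7} e_{1/7}$. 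After expanding via Lemma \ref{ba}, the combination $xy - xz + yz$ assumes the form
\[
K + \sum_{n=1}^{\infty} \frac{U_n\, n\, q^n}{1-q^n} + \sum_{n=1}^{\infty} \frac{V_n\, q^n}{(1-q^n)^2},
\]
where $K$ is an explicit trigonometric constant and $\{U_n\}, \{V_n\}$ are period-$7$ sequences with closed-form values. Applying Lemma \ref{mip} to the last sum puts it into the same shape as the middle one, after which \eqref{eq:10} reduces to the finite assertion $K = 0$ together with one trigonometric identity per nonzero residue class modulo $7$.

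The principal obstacle is that final trigonometric verification: every ingredient involves $\sin$, $\cos$, $\tan$, $\csc$ at rational multiples of $\pi/14$ and must collapse completely. The cancellations are ultimately governed by the minimal polynomial $t^3 + t^2 - 2t - 1 = 0$ of $2\cos(2\pi/7)$, together with the companion identity $\sin(\pi/7)\sin(2\pi/7)\sin(3\pi/7) = \sqrt{7}/8$ and standard product-to-sum formulas. Each required equality reduces to a polynomial relation in $\cos(2\pi/7)$, $\cos(4\pi/7)$, $\cos(6\pi/7)$ implied by this minimal polynomial; the computation is mechanical and readily confirmed by symbolic algebra, but the sheer length of the intermediate expressions is where most of the labour sits.
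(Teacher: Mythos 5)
Your argument is correct in principle, but it takes a genuinely different and much heavier route than the paper. You treat $xy-xz+yz$ as a quadratic in $e_{1/7},e_{2/7},e_{3/7}$ via Lemma \ref{ba1}, expand the squares by \eqref{db1} and the cross terms by \eqref{db2} (your observation that $e_{\alpha}=e_{1-\alpha}$ makes every mixed product $e_{j/7}e_{k/7}$ fit the template $e_{\alpha}e_{1-2\alpha}$ with $\alpha=1/7,2/7,3/7$ is exactly the device the paper uses later in Lemma \ref{agp}), and then kill all the resulting period-$7$ coefficients by trigonometry over $\mathbb{Q}(\cos(2\pi/7))$. That works, but it buys the identity at the price of a substantial symbolic verification, and one of your intermediate claims is slightly imprecise: after Lemma \ref{mip} the double-pole sum is not literally of the shape $\sum U_n nq^n/(1-q^n)$; you must either compare coefficients of $q^N$ as divisor sums or note, as in \eqref{eq:177}, that the relevant coefficient sequence is constant on nonzero residues. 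The paper's proof avoids all of this: it specializes the single theta identity \eqref{eq:13}, with $q\mapsto q^{7}$ and the three argument pairs \eqref{eq:133}, to express each product $yz$, $xz$, $xy$ individually as a difference of two of the three fixed Lambert series $D_{1},D_{2},D_{3}$ of \eqref{rn}--\eqref{rn1}; the combination $xy-xz+yz=(D_2-D_3)-(D_1-D_3)+(D_1-D_2)$ then telescopes to zero with no trigonometric constants appearing anywhere. Besides being shorter, that route also yields the individual product evaluations \eqref{kg}--\eqref{kg1}, which are of independent use. Your approach is viable as a fallback, but if you pursue it you should spell out the coefficient comparison after Lemma \ref{mip} rather than asserting the two sums have the same shape.
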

\begin{proof}

Replace $q$ by $q^{7}$ in \eqref{eq:13}, and make the respective
substitutions 
\begin{align}
  \label{eq:133}
  (x, y) = (\pi \tau, 2 \pi \tau), \quad (\pi \tau, 3 \pi \tau), \quad
  (2 \pi \tau, 3 \pi \tau)
\end{align}
in the resultant identities to derive, from \eqref{another},
\eqref{yi1}, and \eqref{eq:88}-\eqref{eq:90}, 
\begin{align} \label{kg} 
  D_{1}(q) - D_{2}(q) &= y(q) z(q),  \quad D_{1}(q) - D_{3}(q) = x(q)
  z(q), \\   & D_{2}(q) - D_{3}(q) = x(q) y(q), \label{kg1}
\end{align}
where $D_{1}(q)$, $D_{2}(q)$, and $D_{3}(q)$ take the form
\begin{align} \label{rn}
  D_{1}(q) = \sum_{n=1}^{\infty} \frac{n q^{n}}{1 - q^{7n}} +&
  \sum_{n=1}^{\infty} \frac{nq^{6n}}{1 - q^{7n}},\ \ D_{2}(q) =  \sum_{n=1}^{\infty} \frac{n q^{2n}}{1 - q^{7n}} + \sum_{n=1}^{\infty} \frac{nq^{5n}}{1 - q^{7n}}, \\ 
 &D_{3}(q) =  \sum_{n=1}^{\infty} \frac{n q^{3n}}{1 - q^{7n}} + \sum_{n=1}^{\infty} \frac{nq^{4n}}{1 - q^{7n}}. \label{rn1}
\end{align}
Identity \eqref{eq:10} follows immediately from \eqref{kg}--\eqref{kg1}.
\end{proof}

\section{A proof of the septic system}
We now present a proof of Theorem \ref{d_sept} through a sequence of
elementary lemmas. We first show that the logarithmic derivatives $x(q)$, $y(q)$,
and $z(q)$ coincide with the relevant quadratics on the right side of
\eqref{deqx}--\eqref{deqz}. To prove the final
equation of Theorem \ref{d_sept}, we derive a parameterization for
$E_{4}(q^{7})$ in terms of $x(q)$, $y(q)$, and $z(q)$. 
\begin{lem} \label{agp} Let $x = x(q), y = y(q)$, and $z =z(q)$ be defined by
  \eqref{xy}\eqref{z}. Then 
  \begin{align}
    \label{eq:14}
    q \frac{d}{dq} \log x & = \frac{5 y^{2} + 5z^{2} - 7x^{2}- 20 yz
      -52 xy  + 7 \mathcal{P}}{12} =  1 +
    \sum_{n=1}^{\infty} f(n) \frac{n q^{n}}{1 - q^{n}},  \\ 
    q \frac{d}{dq} \log y  &=\frac{5x^{2} + 5z^{2} - 7 y^{2} + 20
      xz -52yz + 7 \mathcal{P}}{12} =  1 +
    \sum_{n=1}^{\infty} g(n) \frac{n q^{n}}{1 - q^{n}}, \label{deqy} \\ q\frac{d}{dq}
    \log z &= \frac{5x^{2} + 5y^{2} - 7 z^{2} - 20 xy+ 52xz + 7
      \mathcal{P} }{12}=  1 +
    \sum_{n=1}^{\infty} h(n) \frac{n q^{n}}{1 - q^{n}}. 
  \end{align}
where $f,g$ and $h$ are periodic arithmetic functions modulo seven defined
by 
\begin{align}
  \label{eq:15}
  \{f(n)\}_{n=0}^{6} = \{-2,0,-1,&2,2,-1,0\}, \qquad \{g(n)\}_{n=0}^{6} =
  \{-2,-1,2,0,0,2,-1\}, \\ &\{h(n)\}_{n=0}^{6} = \{-2,2,0,-1,-1,0,2\}.
\end{align}
\end{lem}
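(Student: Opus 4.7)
The plan is to prove the two chains of equalities in \eqref{eq:14} (and its analogues for $y$ and $z$) by showing each side equals a common Lambert series of the shape $c + \sum_{n\geq 1} f(n)\, nq^n/(1-q^n)$, with $f$ periodic modulo $7$ and $c$ a constant read off from the leading behavior as $q\to 0$.

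For the rightmost equality, I would logarithmically differentiate the infinite-product representation \eqref{xyz_prod}. Using
$$q\frac{d}{dq}\log (q^{a};q^{7})_{\infty} = -\sum_{\substack{m\geq 1 \\ m\equiv a\ (\bmod\ 7)}} \frac{m q^{m}}{1-q^{m}},$$
valid for $a\in\{1,2,\ldots,7\}$ (with $a=7$ corresponding to the class $m\equiv 0$), the signed exponents appearing in \eqref{xyz_prod} translate directly into the tabulated values of $f,g,h$ in \eqref{eq:15}. The constant term arises from the leading factor of $q$ in $x(q)$ and $y(q)$.

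For the leftmost equality, I would expand the quadratic expression as a Lambert series. By Lemma \ref{ba1}, each of $x,y,z$ is a linear combination of $e_{1/7},e_{2/7},e_{3/7}$, so every one of the six quadratic monomials $x^{2},y^{2},z^{2},xy,yz,xz$ is a linear combination of the six products $e_{i/7}e_{j/7}$ with $i,j\in\{1,2,3\}$. The three diagonal terms $e_{i/7}^{2}$ have Lambert expansions from \eqref{db1}. For the off-diagonal terms, the symmetry $e_{\alpha}=e_{1-\alpha}$, immediate from \eqref{ser}, identifies $e_{1/7}e_{2/7}$, $e_{2/7}e_{3/7}$, and $e_{3/7}e_{1/7}$ as instances of $e_{\alpha}e_{1-2\alpha}$ at $\alpha=1/7,2/7,3/7$, which are covered by \eqref{db2}. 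Adding $7\mathcal{P}=7-168\sum_{n}nq^{7n}/(1-q^{7n})$ and dividing by $12$ produces an expression of the shape
$$A + \sum_{n\geq 1} D(n)\,\frac{nq^{n}}{1-q^{n}} + \sum_{n\geq 1} E(n)\,\frac{q^{n}}{(1-q^{n})^{2}},$$
with $A$ constant and $D,E$ periodic modulo $7$.

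The main obstacle is the reduction of the $(1-q^{n})^{-2}$ contribution to the target Lambert form. Here Lemma \ref{mip} is the essential tool, repackaging $\sum E(n)q^{n}/(1-q^{n})^{2}$ via the denominator $1-q^{7n}$ so that it meshes with the $\mathcal{P}(q)=E_{2}(q^{7})$ contribution, while the classical identity $\sum_{n}q^{n}/(1-q^{n})^{2}=\sum_{n}nq^{n}/(1-q^{n})$ absorbs any portion of $E(n)$ that is constant in $n$. What remains is a routine but lengthy verification that the trigonometric data of Lemmas \ref{ba1} and \ref{ba}, evaluated at multiples of $\pi/7$, collapse to the rational values tabulated in \eqref{eq:15}. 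The analogues for $y$ and $z$ follow by the same method.
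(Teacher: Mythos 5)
Your proposal is correct and follows essentially the same route as the paper's proof: the equality of the extreme sides comes from logarithmically differentiating the product representations \eqref{xyz_prod}, and the remaining equality comes from writing the quadratic in terms of $e_{1/7}, e_{2/7}, e_{3/7}$ via Lemma \ref{ba1}, expanding the squares and (using $e_{\alpha}=e_{1-\alpha}$) the cross products via Lemma \ref{ba}, and reducing the $(1-q^{n})^{-2}$ sums with Lemma \ref{mip}. The paper likewise relegates the final trigonometric collapse to an exact-arithmetic computation, so no substantive difference remains.
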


\begin{proof} The claimed equality between the extreme sides of \eqref{eq:14}
  follows immediately from the Jacobi triple product representation
  \eqref{xyz_prod} for $x(q)$. 
To prove the rightmost equality of \eqref{eq:14}, we begin by applying Lemma
\ref{ba1} to write the middle expression of \eqref{eq:14} as a polynomial of
degree two in $e_{1/7}(q)$, $e_{2/7}(q)$, and $e_{3/7}(q)$. With the
constants $\alpha_{i}, \beta_{i}, \gamma_{i}$, $i=1,2,3$, defined as in Lemma
\ref{ba1}, we find
\begin{align} \label{cap}
  5 y^{2} + 5z^{2}-52 &xy  - 7x^{2} - 20 yz \\ &=
  (5 \beta_1^2+5
  \gamma_1^2-52 \alpha_1 \beta_1-7 \alpha_1^2-20 \beta_1 \gamma_1 ) e_{1/7}^{2} \nonumber \\ &+ (5 \beta_2^2+5 \gamma_2^2-52 \alpha_2 \beta_2-7
  \alpha_2^2-20 \beta_2 \gamma_2)e_{2/7}^{2}
  \nonumber \\ &+ (5 \beta_3^2+5 \gamma_3^2-52 \alpha_3 \beta_3-7 \alpha_3^2-20 \beta_3
  \gamma_3)e_{3/7}^{2} \nonumber \\ +(10  \beta_1 \beta_2+10 \gamma_1 &
  \gamma_2-52
  \alpha_2\beta_1-52 \alpha_1 \beta_2 -14 \alpha_1 \alpha_2-20
  \beta_2 \gamma_1-20 \beta_1 \gamma_2)e_{1/7}e_{2/7} \nonumber  \\ + (10 \beta_2 \beta_3+10 \gamma_2 &\gamma_3-52 \alpha_3 \beta_2-52 \alpha_2
  \beta_3 -14 \alpha_2 \alpha_3-20 \beta_3 \gamma_2-20 \beta_2
  \gamma_3)e_{2/7}e_{3/7} \\
  \nonumber +(10 \beta_1 \beta_3+10 \gamma_1 &
  \gamma_3-52 \alpha_3
  \beta_1-52 \alpha_1 \beta_3 -14 \alpha_1 \alpha_3-20 \beta_3
  \gamma_1-20 \beta_1 \gamma_3)e_{1/7}e_{3/7} 
  \nonumber \\ :=  \Phi_{1} e_{1/7}^{2} &+\Phi_{2} e_{2/7}^{2}+
  \Phi_{3}e_{3/7}^{2} +\Phi_{4} e_{1/7}e_{2/7} + \Phi_{5}
  e_{1/7}e_{3/7} + \Phi_{6} e_{2/7}e_{3/7} 
\\ &= \sum_{k=1}^{6} \Phi_{k} + \sum_{n=1}^{\infty} 
  \frac{ a_{n} n q^{n}}{1 - q^{n}} + \sum_{n=1}^{\infty}  \frac{b_{n}
    q^{n}}{(1 - q^{n})^{2}}, \label{eq:fd}
\end{align}
where $a_{n}$ and $b_{n}$ are sequences periodic modulo seven defined
through Lemma \ref{ba} by 
\begin{align}
  \label{eq:16}
  a_{n} = \sum_{k=1}^{3} \Phi_{k}\delta_{k/7}(n) + \sum_{r=4}^{6}
  \Phi_{r}\kappa_{(k-3)/7}(n), \quad b_{n} =  \sum_{k=1}^{3} \Phi_{k} \lambda_{k/7}(n) + \sum_{r=4}^{6}
  \Phi_{r} \mu_{(k-3)/7}(n).
\end{align}
By employing exact precision arithmetic in Mathematica
9.0 yields
\begin{align}
  \label{eq:155}
\{a_{n}\}_{n=0}^{6} = \{0, 37, 25, 61, & 61, 25, 37 \}, \quad \{b_{n}\}_{n=0}^{6} = \{
222, -37, -37, -37, -37, -37, -37\}, \nonumber  \\ 
& \Phi_{1} + \Phi_{2} + \Phi_{3} + \Phi_{4} + \Phi_{5} + \Phi_{6} = 5.
\end{align}
After applying Lemma \ref{mip}, the rightmost series of
\eqref{eq:16} takes the form
\begin{align}
  \label{eq:177}
  \sum_{n=1}^{\infty} \frac{b_{n} q^{n}}{(1 - q^{n})^{2}} 
 = \sum_{n=1}^{\infty} \frac{n}{1 - q^{7n}} \sum_{m=1}^{7} (-37)
  q^{mn} + 259 \sum_{n=1}^{\infty} \frac{n q^{7n}}{ 1 - q^{7n}} =
  - 37 \sum_{\substack{n=1 \\ 7 \nmid n}}^{\infty} \frac{n q^{n}}{1 - q^{n}}.
\end{align}
Therefore, by applying the calculations on lines \eqref{cap}--\eqref{eq:155}, we may write
\begin{align}
  \label{eq:189}
   \frac{5 y^{2} + 5z^{2} - 7x^{2}- 20 yz
      -52 xy  + 7 \mathcal{P}}{12} =  1 + \sum_{\substack{n=1 \\ 7
        \nmid n}}^{\infty} \frac{(a_{n} -37)n q^{n}}{12(1 - q^{n})} -
    2\sum_{n=1}^{\infty} \frac{7 n q^{7n}}{1 - q^{7n}}. 
\end{align}
The final equality of \eqref{eq:189} demonstrates the truth of the
first claim of Lemma \ref{agp}. Proofs of the latter two
identities of the lemma are similar. We omit the details.
\end{proof}

Our proof of Lemma \ref{agp} addresses the first three equations \eqref{deqx}--\eqref{deqz} of Theorem \ref{d_sept}. It remains for us to prove the last equation of Theorem
\ref{d_sept}. In the next sequence of lemmas we will show that this equation is a reparameterization
of Ramanujan's original differential equation for weight two
Eisenstein series given by \eqref{rdiff1}
\begin{align}
  \label{eq:9}
 q \frac{dE_{2}}{dq} = \frac{E_{2}^{2} - E_{4}}{12},  
\end{align}
To exhibit the equivalence of \eqref{eq:9} and the last equation of
Theorem \ref{d_sept}, it suffices to
derive a corresponding parameterization for the Eisenstein series of
weight $4$ and argument $q^{7}$; namely, 
\begin{align} \label{eq:4}
  E_{4}(q^{7}) &= x^4-4 x^3 y+12 x^3 z-12 x y^3+4 x z^3+y^4-4 y^3 z-12 y z^3+z^4.
\end{align}
The validity of Equation \eqref{eq:4}  will be addressed following our proof
of Theorem \ref{fina}. We construct relevant parameterizations for Eisenstein series in terms of septic parameters from Klein's relation \eqref{eq:10} as well as a representation appearing in the next lemma for the
Hauptmodul on $\Gamma_{0}(7)$ as a rational function of $x, y,z$. 

\begin{lem} \label{ring1} Let $x = x(q), y = y(q)$, and $z =z(q)$ be defined by
  \eqref{xy}\eqref{z}. Then 
  \begin{align}
    \label{eq:1}
 \frac{(q;q)_{\infty}^{4}}{q(q^{7}; q^{7})_{\infty}^{4}} = \frac{z^{2} - x z-y^2-6 y z}{yz}.
  \end{align}
\end{lem}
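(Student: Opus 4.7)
The plan is to recognize both sides of the identity as weight-zero modular functions on the genus-zero curve $X_0(7)$, and then establish their equality by matching principal parts at the two cusps, reducing the proof to a comparison of finitely many Fourier coefficients.

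First, I would identify the left-hand side, after extracting the appropriate fractional power of $q$, as the standard eta quotient $\eta(\tau)^4/\eta(7\tau)^4$, a classical Hauptmodul for $\Gamma_0(7)$ whose $q$-expansion begins with $q^{-1}$ and which has a simple pole at the cusp $\infty$ and a simple zero at the cusp $0$. From the product representations \eqref{xyz_prod}, one reads off that $x(q), y(q) = O(q)$ while $z(q) \to 1$ as $q \to 0$, so the right-hand side $(z^{2} - xz - y^{2} - 6yz)/(yz)$ is dominated by the term $z/y$, which behaves like $q^{-1}$. Thus both sides share the same principal part at the cusp $\infty$.

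Second, since $x, y, z$ are weight-one modular forms (each a product of an eta power and a quotient of Jacobi theta values at lattice points of order seven), the right-hand side is a quotient of two weight-two expressions, hence a weight-zero modular function on $\Gamma_0(7)$. The critical step is to verify that this quotient is holomorphic at the cusp $0$. I would accomplish this by invoking the Fricke involution $\tau \mapsto -1/(7\tau)$: the septic theta functions $a, b, c$ transform by a known linear substitution under this involution, so the Fricke transforms of $x, y, z$ can be written explicitly, and their rational combination on the right can then be shown to remain bounded at $0$. A complementary route uses Klein's quadratic relation $xy - xz + yz = 0$ from \eqref{eq:10} to rewrite the numerator in forms such as $z(z - x) - y(y + 6z)$ that expose cancellations near the cusp $0$.

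Once both sides are confirmed as modular functions on $\Gamma_0(7)$ sharing the same principal part at $\infty$ and both holomorphic at $0$, their difference is a bounded holomorphic function on the compact Riemann surface $X_0(7)$, hence constant; matching one further Fourier coefficient from the product expansions forces the constant to vanish. The main obstacle is the analysis at the cusp $0$: without explicit modular-transformation data for the individual septic theta functions, confirming holomorphicity of the right-hand side there requires either the Fricke-involution argument or a careful algebraic simplification via Klein's relation. A fully elementary fallback, bypassing the modular interpretation, is to clear denominators in the claimed identity to obtain a polynomial relation among the infinite products $(q^{k};q^{7})_{\infty}$ and verify it by matching Fourier coefficients to an order guaranteed to suffice by the dimension of the ambient space of weight-two modular forms on $\Gamma_0(7)$.
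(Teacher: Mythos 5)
Your strategy is genuinely different from the paper's and is workable in principle, but it is considerably heavier than what the paper actually does, and one of its key steps is not as routine as you suggest. The paper's proof is purely algebraic: it quotes the Elkies--Lachaud formula $j_{7} = (ab^{5}+bc^{5}+ca^{5}-5a^{2}b^{2}c^{2})/(a^{2}b^{2}c^{2})$ for the Hauptmodul in terms of the septic theta functions, uses Klein's quartic relation \eqref{eq:55} to replace $a^{3}b$ by $-b^{3}c-c^{3}a$ in the numerator, and then reads off \eqref{eq:1} directly from the definitions \eqref{xy}--\eqref{z} of $x,y,z$ as quotients of $a,b,c$. No modular-transformation data, valence formula, or cusp analysis is needed; the only outside input is the known expression for $j_{7}$ and Klein's relation (which the paper proves independently as \eqref{eq:10}).

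The specific weak point in your plan is the claim that the right-hand side of \eqref{eq:1} is a modular function on $\Gamma_{0}(7)$. The forms $x,y,z$ are a priori modular only on $\Gamma_{1}(7)$ (with the diamond operators permuting them, up to sign), and the combination $(z^{2}-xz-y^{2}-6yz)/(yz)$ is visibly not symmetric under that permutation, so its $\Gamma_{0}(7)$-invariance is not automatic --- it only becomes symmetric modulo the ideal generated by $xy-xz+yz$, i.e.\ after invoking Klein's relation, which is exactly the nontrivial input. Without that, you must work on $X_{1}(7)$ and control the behaviour at all six of its cusps, not two, and the Fricke-involution computation for the individual septic theta functions is transformation data the paper never develops. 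Your elementary fallback (clear denominators and verify Fourier coefficients up to a Sturm-type bound for the relevant space of forms on $\Gamma_{1}(7)$) is sound and would close the argument, but at that point the modular-function framing is doing little work; the paper's two-line algebraic derivation from \eqref{eq:3} and \eqref{eq:55} is both shorter and self-contained given its cited sources.
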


\begin{proof}
  From \cite[p. 88]{elkies} (cf. \cite[p. 838]{ lachaud}) we have
  \begin{align}
    \label{eq:3}
    j_{7} :=  \frac{(q;q)_{\infty}^{4}}{q(q^{7}; q^{7})_{\infty}^{4}} = \frac{ab^{5} + b c^{5} + ca^{5} - 5 a^{2} b^{2} c^{2}}{a^{2}  b^{2} c^{2}},
  \end{align}
where $a, b$ and $c$ are defined by
\eqref{abq}--\eqref{cq}. Klein's quartic relation \eqref{eq:55}
implies
\begin{align}
  \label{eq:6}
  ab^{5}+ b c^{5} + c a^{5} - 5 a^{2} b^{2} c^{2} = ab^{5} + ca^{5}
  + 6 b c^{5} + \frac{5 b^{4} c^{3}}{a}. 
\end{align} 
Hence, by \eqref{eq:3}, and \eqref{eq:6},
\begin{align}
  \label{eq:7}
  \frac{(q;q)_{\infty}^{4}}{q(q^{7}; q^{7})_{\infty}^{4}} &= \frac{a^{2}b^{5} + ca^{6}
  + 6 ab c^{5} + 5 b^{4} c^{3}}{a^{3}  b^{2} c^{2}} 
= \frac{z^{2} - x z-y^2-6 y z}{yz},
\end{align}
where last equality of \eqref{eq:7} follows from \eqref{xy}--\eqref{z}.
\end{proof}

\begin{lem} \label{ring2}  Let $x = x(q), y = y(q)$, and $z =z(q)$ be defined by
  \eqref{xy}\eqref{z}. Then 
  \begin{align} \label{eq:hm}   q^{2} \frac{(q^{7}; q^{7})^{7}}{(q;q)_{\infty}} &=
   xyz, \qquad 
 q (q;q)_{\infty}^{3} (q^{7}; q^{7})_{\infty}^{3} = x \left(z^{2} -y^2 + 6xy -7 x z\right), \\
   \label{ss3} \frac{(q;q)_{\infty}^{7}}{(q^{7}; q^{7})_{\infty}} &= x^3-32 x^2 y+13 x y^2-y^3+45 x^2 z-13 x z^2+z^3.
  \end{align}
\end{lem}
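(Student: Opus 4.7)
The three identities form a chain, each building on the previous, so I would prove them in order.

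\textbf{Identity (i).} The equation $q^{2}(q^{7};q^{7})_{\infty}^{7}/(q;q)_{\infty} = xyz$ is a direct consequence of the Jacobi triple product representations \eqref{xyz_prod}. Multiplying the three product formulas, the factors $q^{2}$ and $(q^{7};q^{7})_{\infty}^{6}$ collect immediately, while each of the six factors $(q^{k};q^{7})_{\infty}$ with $1 \le k \le 6$ appears with combined exponent $1 - 2 = -1$. Using the decomposition $(q;q)_{\infty} = (q^{7};q^{7})_{\infty}\prod_{k=1}^{6}(q^{k};q^{7})_{\infty}$, the remaining product of reciprocals collapses to $(q^{7};q^{7})_{\infty}/(q;q)_{\infty}$, giving the claim.

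\textbf{Identity (ii).} Multiplying identity (i) by the Hauptmodul $j_{7} = (q;q)_{\infty}^{4}/[q(q^{7};q^{7})_{\infty}^{4}]$ yields two simultaneous expressions
\[
j_{7}\cdot xyz = \frac{(q;q)_{\infty}^{4}}{q(q^{7};q^{7})_{\infty}^{4}} \cdot \frac{q^{2}(q^{7};q^{7})_{\infty}^{7}}{(q;q)_{\infty}} = q(q;q)_{\infty}^{3}(q^{7};q^{7})_{\infty}^{3},
\]
\[
j_{7}\cdot xyz = \frac{z^{2}-xz-y^{2}-6yz}{yz} \cdot xyz = x(z^{2}-xz-y^{2}-6yz),
\]
where the second equality invokes Lemma \ref{ring1}. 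Klein's relation \eqref{eq:10} in the form $yz = xz - xy$ then converts $-xz - 6yz$ into $-7xz + 6xy$, producing the claimed symmetric form.

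\textbf{Identity (iii).} Writing $(q;q)_{\infty}^{7}/(q^{7};q^{7})_{\infty} = j_{7}\cdot q(q;q)_{\infty}^{3}(q^{7};q^{7})_{\infty}^{3}$ and applying identity (ii) gives
\[
\frac{(q;q)_{\infty}^{7}}{(q^{7};q^{7})_{\infty}} = \frac{x(z^{2}-xz-y^{2}-6yz)(z^{2}-y^{2}-7xz+6xy)}{yz}.
\]
The main obstacle is then purely computational: I must show that the right side reduces, modulo Klein's relation, to $x^{3}-32x^{2}y+13xy^{2}-y^{3}+45x^{2}z-13xz^{2}+z^{3}$. I would verify this by eliminating $z$ via $z = xy/(x-y)$, whereupon both sides become rational functions in $x,y$ with common denominator $(x-y)^{3}$; the identity then reduces to the polynomial equation
\[
(x^{3}+5x^{2}y-8xy^{2}+y^{3})^{2} = (x^{3}-32x^{2}y+13xy^{2}-y^{3})(x-y)^{3} + 45x^{3}y(x-y)^{2} - 13x^{3}y^{2}(x-y) + x^{3}y^{3},
\]
which is easily checked by direct expansion of both sides as degree-six polynomials. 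Alternatively, one may carry out the reduction directly in $\mathbb{C}[x,y,z]/(xy - xz + yz)$ by repeatedly substituting $yz = xz - xy$.
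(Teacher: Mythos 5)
Your proposal is correct and follows essentially the same route as the paper: the first identity from the triple product representations \eqref{xyz_prod}, and the second and third by successively multiplying by the Hauptmodul of Lemma \ref{ring1} and reducing modulo Klein's relation \eqref{eq:10}. Your explicit verification of the final polynomial identity (via the elimination $z = xy/(x-y)$) checks out and simply fills in computational detail the paper leaves to the reader.
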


\begin{proof}
  The leftmost equation on line \eqref{eq:hm} follows from the Jacobi triple product
  representations \eqref{xyz_prod} for $x, y,$ and $z$. To derive, the second equation of \eqref{eq:hm},
  multiply \eqref{eq:1} by the left equation of \eqref{eq:hm} to derive
  \begin{align}\label{wl}
     q (q;q)_{\infty}^{3} (q^{7}; q^{7})_{\infty}^{3} = x \left(z^{2} -y^2 + 6xy -7 x z\right) - 6x(xy - xz + y z).
  \end{align}
Klein's quartic
  relation \eqref{eq:10} may be applied to \eqref{wl} to arrive at the rightmost equation of \eqref{eq:hm}. Equation \eqref{ss3}
  may be derived by multiplying the second equation of \eqref{eq:hm} by equation \eqref{eq:1} and similarly applying Klein's quartic
  relation.
\end{proof}
We now employ formulas for Eisenstein series equivalent to those
appearing in Ramanujan's Lost Notebook \cite[p. 53]{ramlost}  (see also \cite{MR1600120,MR2511669,liusom}) to construct relevant
representations for Eisenstein series.  The following representations
were formulated by S. Cooper and P. C. Toh \cite[p. 176]{MR2511669}
from Ramanujan's septic representations for Eisenstein series.
\begin{lem} \label{coopt} Let $$\sigma
  =   1+ 2 \sum_{n=1}^{\infty} \left (
        \frac{n}{7} \right ) \frac{q^{n}}{1 - q^{n}} , \qquad Z = \frac{(q;q)_{\infty}^{7}}{(q^{7};
    q^{7})_{\infty}}, \qquad X =  \frac{q(q^{7}; q^{7})_{\infty}^{4}}{(q;q)_{\infty}^{4}}.$$
Then
\begin{align}
  \label{eq:2}
E_{4}(q) &= Z \sigma ( 1 + 245 X + 2401 X^{2}), \\ \label{eq7}
E_{4}(q^{7}) &= Z \sigma ( 1 + 5X  + X^{2}), \\ 
E_{6}(q) & = Z^{2} ( 1 - 490 X - 21609 X^{2} - 235298 X^{3} - 823543
X^{4}), \\ 
E_{6}(q^{7}) & - Z^{2}( 1 + 14 X + 63 X^{2} + 70 X^{3} - 7 X^{4}).
\end{align}  
\end{lem}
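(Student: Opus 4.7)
The plan is to prove each of the four identities by exhibiting both sides as modular forms on $\Gamma_0(7)$ of matching weight and character, and then determining the coefficients by comparing a small number of initial $q$-expansion terms. This reduces each claim to a finite linear-algebra computation in the relevant space of modular forms.

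First I would verify the transformation properties of the three building blocks $Z$, $\sigma$, and $X$. Writing $Z = \eta(\tau)^7/\eta(7\tau)$, a standard eta-multiplier calculation identifies $Z$ as a weight-$3$ modular form on $\Gamma_0(7)$ transforming under the quadratic character $\chi_7 = \left(\frac{\cdot}{7}\right)$, while the Lambert series in Lemma \ref{eiswt1} identifies $\sigma$ as the weight-one Hecke Eisenstein series with the very same character. Since $\chi_7^2$ is trivial, $Z\sigma$ belongs to $M_4(\Gamma_0(7))$ and $Z^2$ belongs to $M_6(\Gamma_0(7))$. The Hauptmodul $X$ is a modular function of weight zero on $\Gamma_0(7)$ with a simple zero at $i\infty$ and a simple pole at the cusp $0$, so multiplication by $X$ preserves the space while shifting the order of vanishing at $i\infty$ by one.

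Next I would invoke the dimension formula for $M_k(\Gamma_0(7))$: since $\Gamma_0(7)$ has genus $0$, two cusps, no elliptic points of order two, and two elliptic points of order three, one computes $\dim M_4(\Gamma_0(7)) = 3$ and $\dim M_6(\Gamma_0(7)) = 5$. The forms $Z\sigma, Z\sigma X, Z\sigma X^2$ have strictly distinct orders of vanishing at $i\infty$, hence are linearly independent and span $M_4(\Gamma_0(7))$; by the same argument $\{Z^2 X^j\}_{j=0}^{4}$ spans $M_6(\Gamma_0(7))$. Consequently $E_4(q)$ and $E_4(q^7)$ each admit a unique expansion of the form $Z\sigma(c_0 + c_1 X + c_2 X^2)$, and $E_6(q), E_6(q^7)$ each admit a unique expansion of the form $Z^2 \sum_{j=0}^{4} d_j X^j$.

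The listed coefficients would then be pinned down by matching a sufficient number of initial $q$-coefficients on each side: three equations for each weight-$4$ identity and five for each weight-$6$ identity, easily read from the product formulas for $Z$ and $X$ and the Lambert-series definitions of $\sigma, E_4, E_6$. The main obstacle will be the careful bookkeeping of the eta-multiplier for $Z$ and the verification that $\sigma$ transforms with character $\chi_7$ under the generators of $\Gamma_0(7)$; once those transformation laws are secured, the remainder is a mechanical $q$-series calculation independent of the deeper elliptic-function identities used elsewhere in the paper.
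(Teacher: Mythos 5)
The paper offers no proof of this lemma at all: it is quoted verbatim from Cooper and Toh \cite[p.~176]{MR2511669}, who in turn derive these parameterizations by $q$-series manipulation of Ramanujan's septic Eisenstein series identities from the Lost Notebook. Your modular-forms argument is therefore a genuinely different (and essentially sound) route: identifying $Z=\eta(\tau)^7/\eta(7\tau)$ as a weight-three form with character $\left(\tfrac{\cdot}{7}\right)$, $\sigma$ as the weight-one Eisenstein series with the same odd character, computing $\dim M_4(\Gamma_0(7))=3$ and $\dim M_6(\Gamma_0(7))=5$, and then matching finitely many $q$-coefficients is a complete and rigorous scheme, and it has the advantage of being independent of the elliptic-function machinery used elsewhere in the paper. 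One step needs to be stated more carefully, however: your claim that ``multiplication by $X$ preserves the space'' is false as written, since $X$ has a simple pole at the cusp $0$; taken literally it would place $Z\sigma X^{j}$ in $M_4(\Gamma_0(7))$ for every $j$. What actually makes the argument work is that $Z$ vanishes to order exactly $2$ at the cusp $0$ (and $Z^2$ to order $4$), so that $Z\sigma X^{j}$ is holomorphic there precisely for $0\le j\le 2$ and $Z^{2}X^{j}$ precisely for $0\le j\le 4$ --- equivalently, $Z\sigma X^{2}=\sigma\,\eta(7\tau)^{7}/\eta(\tau)$ and $Z^{2}X^{4}=\eta(7\tau)^{14}/\eta(\tau)^{2}$ are manifestly holomorphic eta-quotients. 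With that order-of-vanishing computation supplied, your bases are legitimate and the coefficient matching (a triangular linear system, since $Z\sigma X^{j}=q^{j}+\cdots$) finishes the proof; compared with the Cooper--Toh derivation, you trade explicit $q$-series identities for standard facts about $M_k(\Gamma_0(7))$ and a Sturm-type bound.
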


The parameterizations from Lemma \ref{coopt} may be transcribed in equivalent form as polynomials in $x(q)$, $y(q)$, and $z(q)$.  These beautiful
formulas comprise the last ingredient needed for our proof of Theorem
\ref{d_sept}. Their intentional symmetry is one of the infinitely many
equivalent septic formulations made possible by Klein's
relation \eqref{eq:10}.
\begin{thm} \label{fina}
\begin{align}
 \nonumber    E_{4}(q) &= x^4-116 x^3 y+116 x y^3+y^4-116 x^3 z+848 x y z^{2}+848 x^2 y z \\ & \qquad -848 x y^2 z-116 y^3 z+116 x z^3+116 y z^3+z^4, \\
 \nonumber    E_{4}(q^{7}) &= x^4+4 x^3 y-4 x y^3+y^4+4 x^3 z+8 x y z+8 x^2 y
    z-8 x y^2 z\\ & \qquad +4 y^3 z -4 x z^3 -4 y z^3+z^4,  \label{sei}
\end{align}
\begin{align}  
    E_{6}(q) &= x^6+258 x^5 y-5904 x^4 y^2-5904 x^2 y^4-258 x
    y^5+y^6+258 x^5 z \\ & \qquad +7310 x^3 y^2 z +7310 x^2 y^3 z+258
    y^5 z-5904 x^4 z^2+7310 x^3 y z^2 \nonumber \\ & \qquad -8751 x^2
    y^2 z^2-7310 x y^3 z^2-5904 y^4 z^2-7310 x^2 y z^3 \nonumber \\ &\qquad -7310 x y^2 z^3 -5904 x^2 z^4-5904 y^2 z^4-258 x z^5-258 y z^5+z^6, \\
  E_{6}(q^{7}) &= x^6+6 x^5 y+18 x^4 y^2+18 x^2 y^4-6 x y^5+y^6+6 x^5
  z+2 x^3 y^2 z \nonumber \\ &\qquad +2 x^2 y^3 z +6 y^5 z+18 x^4
  z^2+2 x^3 y z^2-57 x^2 y^2 z^2-2 x y^3 z^2 \nonumber \\ &\qquad +18 y^4 z^2-2
  x^2 y z^3-2 x y^2 z^3+18 x^2 z^4+18 y^2 z^4 -6 x z^5 \nonumber \\ & \qquad -6 y z^5+z^6.
  \end{align}
\end{thm}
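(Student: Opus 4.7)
The plan is to convert the Cooper--Toh parameterizations of Lemma \ref{coopt} into the claimed polynomial identities in $x,y,z$ by replacing each building block $\sigma$, $Z$, $ZX$, $ZX^{2}$ (and, for the weight-six series, the products $Z^{2}X^{k}$) by its known polynomial expression in $x,y,z$. By Lemma \ref{eiswt1}, $\sigma = x - y + z$. The rightmost equation of Lemma \ref{ring2} already displays $Z$ as the cubic $x^{3}-32x^{2}y+13xy^{2}-y^{3}+45x^{2}z-13xz^{2}+z^{3}$. The middle equation gives $ZX = q(q;q)_{\infty}^{3}(q^{7};q^{7})_{\infty}^{3} = x(z^{2}-y^{2}+6xy-7xz)$, and the leftmost equation gives $ZX^{2} = q^{2}(q^{7};q^{7})_{\infty}^{7}/(q;q)_{\infty} = xyz$. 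Thus every term in $E_{4}(q) = Z\sigma(1 + 245 X + 2401 X^{2})$ and $E_{4}(q^{7}) = Z\sigma(1 + 5X + X^{2})$ factors into a product of polynomials in $x,y,z$.

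For the sextic series an identical recipe works once one observes that the required products decompose as
\begin{equation*}
Z^{2} = Z\cdot Z,\quad Z^{2}X = Z(ZX),\quad Z^{2}X^{2} = (ZX)^{2},\quad Z^{2}X^{3} = (ZX)(ZX^{2}),\quad Z^{2}X^{4} = (ZX^{2})^{2},
\end{equation*}
each factor on the right being a polynomial in $x,y,z$ already determined above. Substituting these into Cooper--Toh's formulas yields candidate polynomial forms for $E_{6}(q)$ and $E_{6}(q^{7})$ by direct expansion. A quick degree check confirms the expected totals: the quartic series produce homogeneous quartics and the sextic series produce homogeneous sextics in $x,y,z$, matching the degrees of the claimed right-hand sides.

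The main obstacle is cosmetic rather than conceptual. The raw expansions produced by the substitutions above are not in the symmetric form stated in Theorem \ref{fina}; the two must be reconciled modulo Klein's quadratic relation \eqref{eq:10}, namely $xy - xz + yz = 0$. Because $x,y,z$ satisfy only this single quadratic relation, two polynomial expressions for the same modular form differ by a multiple of $xy - xz + yz$. It therefore suffices to verify that the difference between the raw expansion and the claimed symmetric representative lies in the principal ideal generated by $xy - xz + yz$; equivalently, one applies a Gr\"obner reduction against this generator, or, by hand, adds an explicit correction of the form $P(x,y,z)\,(xy-xz+yz)$ of the appropriate bi-degree and solves for the coefficients of $P$. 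For the quartic identities $P$ is only quadratic and the correction involves a handful of monomials, whereas for the sextic identities $P$ is quartic and the bookkeeping becomes the most laborious step of the proof. The symmetric form displayed in the statement is simply the distinguished representative singled out by the cyclic structure inherited from the automorphism of Klein's quartic, making the final polynomial identities visually pleasing but not uniquely determined.
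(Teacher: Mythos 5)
Your proposal is correct and follows essentially the same route as the paper: substitute the Cooper--Toh parameterizations of Lemma \ref{coopt} into polynomial expressions in $x,y,z$ via Lemmas \ref{eiswt1}, \ref{ring1}, and \ref{ring2}, then check that the discrepancy with the stated symmetric forms is a multiple of Klein's quadratic $xy - xz + yz$ and hence vanishes by \eqref{eq:10}. The only difference is organizational --- the paper substitutes the rational expression for $X$ from Lemma \ref{ring1} and clears the denominator $\left(zx - z^{2}+y^{2}+6 y z\right)^{2}$ at the end, whereas you keep everything polynomial by working with the blocks $Z$, $ZX$, $ZX^{2}$ --- and both reduce to the same explicit verification modulo \eqref{eq:10}.
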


\begin{proof}
  To prove \eqref{sei}, apply \eqref{eq7} and the formulas from
  Lemmas \ref{ring1}-\ref{ring2} to derive 
  \begin{align} \label{frw}
     &Z \sigma ( 1 + 5X  + X^{2}) \\ & - (x^4+4 x^3 y+4 x^3 z+8 x^2 y
     z-4 x y^3-8 x y^2 z+8 x y z^2-4 x z^3+y^4+4 y^3 z-4 y z^3+z^4)
     \nonumber \\
 &= (x y - x z + y z) \frac{-42 x^4 z^2-79 x^3 y^2 z-258 x^3 y
   z^2+52 x^3 z^3-37 x^2 y^4}{\left(z x - z^{2}+y^2+6 y z\right)^2}
 \nonumber \\ 
&+  (x y - x z + y z) \frac{-184 x^2 y^3 z +3 x^2 y^2 z^2-6 x^2
  yz^3+30 x^2 z^4+45 x y^5}{\left(z x - z^{2}+y^2+6 y z\right)^2}
\nonumber 
\\ &+ (x y - x z + y z)  \frac{+292 x y^4 z+90 x y^3 z^2-449 x y^2
  z^3+334 x y z^4-48 x z^5}{\left(z x - z^{2}+y^2+6 y z\right)^2}
\nonumber 
\\ &+ (x y - x z + y z)  \frac{-10 y^6-84 y^5 z-133 y^4 z^2+96 y^3
  z^3+121 y^2 z^4-70 y z^5+8 z^6 }{\left(z x - z^{2}+y^2+6 y
    z\right)^2}. \nonumber
\end{align}
Klein's relation \eqref{eq:10} implies that the difference of the
expression on line \eqref{frw} and the line immediately following is
zero. Since the other relations of Theorem \ref{fina} are similarly derived, we omit the
details. 
\end{proof}
To derive the last equation of Theorem \ref{d_sept} and complete the
proof of Theorem \ref{d_sept}, note that by \eqref{eq:10}, the difference of the right sides of\eqref{sei} and \eqref{eq:4} equals
\begin{align}
 8(xy - xz +y z) (x^{2} + y^{2} + z^{2}) = 0.
\end{align}
Parameterizations appearing in Theorem \ref{fina} are
distinguished from equivalent representations in
\cite{MR1600120,MR2511669,liusom,ramlost} by the apparent
coefficient symmetry. These representations are analogous to balanced quintic parameterizations from \cite{qeis}.   
Symmetric septic representations for more general Eisenstein
series will be explored in a subsequent paper.


\end{document}